\documentclass[11pt]{article}

\usepackage[a4paper,margin=28mm]{geometry}
\usepackage[T1]{fontenc}
\usepackage{lmodern}
\usepackage[nopatch=footnote]{microtype}
\usepackage{amsmath,amssymb,amsthm,mathtools}
\usepackage{enumitem}
\usepackage{needspace}
\usepackage{xcolor}
\usepackage[colorlinks=true,linkcolor=blue!55!black,citecolor=blue!55!black,urlcolor=blue!55!black]{hyperref}
\usepackage[nameinlink,noabbrev]{cleveref}
\usepackage{setspace}

\allowdisplaybreaks
\setlist[itemize]{leftmargin=2em,itemsep=0.2em,topsep=0.35em}

\newtheorem{theorem}{Theorem}[section]
\newtheorem{proposition}[theorem]{Proposition}

\theoremstyle{definition}
\newtheorem{definition}[theorem]{Definition}

\newcommand{\R}{\mathbb R}
\newcommand{\cC}{\mathcal C}
\newcommand{\cT}{\mathcal T}
\newcommand{\cR}{\mathcal R}

\newcommand{\dd}{\,\mathrm d}
\newcommand{\dist}{\operatorname{dist}}
\newcommand{\osc}{\operatorname*{ess\,osc}}
\newcommand{\Reg}{\operatorname{Reg}}

\newcommand{\supp}{\operatorname{spt}}
\newcommand{\Lip}{\operatorname{Lip}}
\newcommand{\Tr}{\operatorname{Tr}}
\newcommand{\fintavg}{\mathop{\rlap{\raisebox{0.13ex}{--}}\!\int}\nolimits}

\title{\textbf{The Sharp $C^{1,1}$ Target Threshold for Minimizing Constraint Maps}}
\author{Yilu Liu}
\date{}

\begin{document}
\maketitle

\begin{abstract}
Let $K\subset\R^m$ be the closure of a bounded domain whose boundary is a compact embedded hypersurface, and let
\(
 \overline M=\R^m\setminus\operatorname{int}K
\)
be the allowed target.  We prove that if $\partial K$ is of class
$C^{1,1}$, then every $\overline M$-valued local minimizer of the
Dirichlet energy $E(u;B):=\int_B|Du|^2\dd x$ is locally
$W^{2,\infty}$, and hence locally $C^{1,1}$, on its continuity set.  For
every $0<\alpha<1$, we also construct a convex
body with $C^{1,\alpha}$ boundary and an everywhere continuous global
 minimizer that is not $C^{1,1}$.  Thus $C^{1,1}$ is the sharp target
 regularity threshold in the H\"older scale.
\end{abstract}

\noindent\textbf{Keywords.} constraint maps; target obstacles; endpoint regularity; Lewy--Stampacchia inequality.

\section{Introduction and main results}

Let $\Omega\subset\R^n$ be open, $n\ge1$, and let $K\subset\R^m$, $m\ge2$, be the closure of a bounded domain with compact embedded boundary
\[
 \Gamma:=\partial K.
\]
We regard $K$ as the obstacle in the target and set
\[
 M:=\R^m\setminus K,
 \qquad
 \overline M=\R^m\setminus\operatorname{int}K.
\]
Thus admissible maps are allowed to take values on $\Gamma$ and on the exterior side of $\Gamma$, but not in $\operatorname{int}K$.

\begin{definition}\label{def:minimizer}
A map $u\in W^{1,2}_{\rm loc}(\Omega;\overline M)$ is a \emph{local minimizing constraint map} if, for every ball $B\Subset\Omega$ and every $v\in W^{1,2}(B;\overline M)$ with $v-u\in W^{1,2}_0(B;\R^m)$,
\[
 \int_B |Du|^2\dd x\le \int_B |Dv|^2\dd x.
\]
\end{definition}

The continuity set is defined without choosing a representative:
\[
 \Reg_0(u)
 :=\left\{x\in\Omega:
      \lim_{r\downarrow0}\osc_{B_r(x)}u=0\right\},
 \qquad
 \Sigma(u):=\Omega\setminus\Reg_0(u).
\]
This definition is independent of the representative.  The proof below
shows that $\Reg_0(u)$ is open and supplies a continuous representative
of $u$ there.

The problem combines three strands of regularity theory: harmonic maps,
variational inequalities, and vector-valued maps subject to geometric
constraints.  The variational theory of harmonic maps begins with
Eells--Sampson \cite{EellsSampson1964} and Morrey \cite{Morrey1966}, while
existence and regularity under convexity or curvature assumptions were
developed further by Hildebrandt--Kaul--Widman
\cite{HildebrandtKaulWidman1977}.  The general regularity framework for
vectorial variational integrals is presented in \cite{Giaquinta1983}.

For energy-minimizing harmonic maps, the foundational interior and boundary
partial-regularity results are due to Schoen--Uhlenbeck
\cite{SchoenUhlenbeck1982,SchoenUhlenbeck1983}, with a sharper conclusion
for sphere-valued minimizers in \cite{SchoenUhlenbeck1984}.  Related
constrained models arise in liquid-crystal theory
\cite{HardtKinderlehrerLin1986}, while extensions to $p$-energy and more
general integrands include \cite{HardtLin1987,Luckhaus1988}.  Later work
distinguished the singular behavior of stationary maps
\cite{Bethuel1993,Evans1991,Lin1999} and revealed the role of conservation
laws in critical conformally invariant systems \cite{Riviere2007}.  Broader
accounts of harmonic-map regularity and singularities may be found in
\cite{Helein2002,Simon1996,LinWang2008}.

On the scalar side, the obstacle problem is organized by the
variational-inequality framework of Lions--Stampacchia
\cite{LionsStampacchia1967} and the Lewy--Stampacchia reaction estimate
\cite{LewyStampacchia1969}; see also
\cite{KinderlehrerStampacchia1980}.  Optimal solution regularity for
nonlinear obstacle problems was established in \cite{DuzaarFuchs1986},
whereas the regularity and geometry of the free boundary were developed in
\cite{Caffarelli1998}; systematic treatments appear in
\cite{CaffarelliSalsa2005,PetrosyanShahgholianUraltseva2012}.

For vector-valued maps, a nonconvex target changes both the admissible
variations and the Euler equation; the effects of geometric and topological
constraints already appear in relaxed harmonic-map energies
\cite{BethuelBrezis1991}.  In the target-obstacle setting, Duzaar
\cite{Duzaar1987} obtained a measure-valued Euler equation and $W^{2,p}$
regularity under smooth geometric hypotheses.  The recent work
\cite{FKSobstacle} proved local $C^{1,1}$ regularity for continuous
minimizers when the target boundary is smooth.  In the subsequent survey,
Figalli, Guerra, Kim, and Shahgholian posed Problem~7.1, asking for the
optimal regularity of the target boundary that guarantees local $C^{1,1}$
regularity away from the singular set, and proposed $C^{1,1}$ as the
expected threshold \cite[Problem~7.1]{FGKSreview}.

At this endpoint, the second fundamental form exists only almost
everywhere in the target, so its pullback by $u$ need not be defined on a
full-measure subset of the source.  Our proof instead uses the positive-reach
geometry initiated by Federer \cite{Federer1959} and the Lipschitz regularity
of nearest-point projections on a strictly smaller tube; for a modern
treatment of the latter, see \cite{LeobacherSteinicke2021}.  Curvature enters
only through source-space Lipschitz fields and one-sided quadratic support
inequalities.

The following theorem resolves Problem~7.1 of \cite{FGKSreview} for the
target obstacles considered here.

\begin{theorem}\label{thm:main}
Assume that $\Gamma=\partial K$ is of class $C^{1,1}$ and let $u\in W^{1,2}_{\rm loc}(\Omega;\overline M)$ be a local minimizing constraint map.  Then
\[
 u\in W^{2,\infty}_{\rm loc}(\Reg_0(u);\R^m)
 \subset C^{1,1}_{\rm loc}(\Reg_0(u);\R^m).
\]
In particular, if a representative of $u$ is continuous at
$x_0\in\Omega$, then there exists $r>0$ such that
\[
 u\in W^{2,\infty}(B_r(x_0);\R^m).
\]
\end{theorem}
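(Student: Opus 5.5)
Fix $x_0\in\Reg_0(u)$. The plan is to localize, reduce to a scalar obstacle problem for the signed distance, get $W^{2,\infty}$ for that scalar via a Lewy--Stampacchia argument, and then transfer the regularity back to $u$.

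\textbf{Localization.} Since $\osc_{B_r(x_0)}u\to0$, we choose $r$ so small that $u(B_r(x_0))$ lies, up to null sets, in a ball of radius $\delta\ll\rho$ around a point $p_0\in\overline M$. Here $\rho$ is the reach of $\Gamma$, which is positive because $\Gamma$ is a compact $C^{1,1}$ hypersurface (Federer). There are two cases.

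\emph{Case 1: $\dist(p_0,\Gamma)>2\delta$.} Then the constraint is inactive. Minimality against arbitrary compactly supported perturbations makes $u$ harmonic, hence smooth, on $B_r(x_0)$.

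\emph{Case 2: the image lies in the tube $U_{2\delta}=\{\dist(\cdot,\Gamma)<2\delta\}$.} Let $d$ be the signed distance, positive in $M$. Then $d\in C^{1,1}(U_{2\delta})$ with $|\nabla d|=1$, and the nearest-point projection $\pi=\mathrm{id}-d\nabla d$ is Lipschitz on the smaller tube. Write $u=\pi(u)+\phi\,\nu(u)$ with $\phi:=d\circ u\ge0$ and $\nu=\nabla d$. Since $\nabla d$ is only Lipschitz, chain rules are applied with care: $D(\nabla d\circ u)$ exists a.e.\ and is bounded by $C|Du|$, using Lipschitz-composed-with-Sobolev. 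As a preliminary, we establish the Hölder/Lipschitz bound $Du\in L^\infty(B_{r/2})$ by comparison with harmonic replacements. Continuity together with minimality gives the small-oscillation Campanato estimate of Schoen--Uhlenbeck/Duzaar type. That proof uses only that the projection onto $\overline M$ near $\Gamma$ is Lipschitz, which positive reach supplies, and it yields $u\in C^{0,1}$ near $x_0$.

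\textbf{Euler--Lagrange system.} Competitors are $v=u+t\eta$ for $\eta$ tangentially supported. Outward perturbations $u+t\eta\,\nu(u)$, $\eta\ge0$, are admissible; for inner and tangential variations we project by $\pi_{\overline M}$, which is well defined and Lipschitz near $\Gamma$. This gives $-\Delta u=\mu\,\nu(u)$ with $\mu\ge0$ a Radon measure supported on $\{\phi=0\}$. The key estimate is a Lewy--Stampacchia bound $0\le\mu\le C|Du|^2\,\mathcal L^n$, with $C$ depending on $\|D^2d\|_{L^\infty}$. I would derive it without pulling back $D^2d$ pointwise, using the one-sided quadratic support inequality valid for positive reach,
\[
d(q)\ge d(p)+\nabla d(p)\cdot(q-p)-\tfrac{C}{2}|q-p|^2 .
\]
Testing with $\eta=(\varepsilon-\phi)^+/\varepsilon$ and letting $\varepsilon\to0$ should show that $\phi$ satisfies, in the distributional sense,
\[
\Delta\phi\le C|Du|^2 \ \text{ where }\phi>0, \qquad \Delta\phi\ge -C|Du|^2 .
\]
The approach is to compute $\Delta\phi$ via difference quotients of $d$ along $u$, which uses only Lipschitz bounds on $\nabla d$, and to compare with $\mu$. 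The upshot is that $\phi$ solves a scalar obstacle problem $\min(-\Delta\phi-f,\phi)=0$ type with $f\in L^\infty$, since $|Du|$ is bounded. By the scalar theory (Lewy--Stampacchia plus Caffarelli/Duzaar--Fuchs) this gives $\phi\in W^{2,\infty}_{\rm loc}$, and also $\mu=(\Delta\phi\text{-defect})\in L^\infty$.

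\textbf{Bootstrap.} With $\mu\in L^\infty$ we get $\Delta u\in L^\infty$, hence $u\in W^{2,p}$ for all $p<\infty$ and $u\in C^{1,\beta}$. This is not yet $W^{2,\infty}$, because $L^\infty$ Laplacian does not give bounded Hessian. To finish, decompose $\Delta u=\mu\,\nu(u)$. Here $\nu(u)$ is Lipschitz, $\mu=-\Delta\phi$ restricted to the contact set, and $\phi\in W^{2,\infty}$. The plan is to write $\mu\,\nu(u)=-\Delta(\phi\,\nu(u))+2D\phi\cdot D(\nu(u))+\phi\,\Delta(\nu(u))$ on appropriate sets, showing that $u-\phi\,\nu(u)$ has bounded Laplacian with a better structure. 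The cleanest route is to note $u+\phi\,\nu(u)\cdot$ correction $=\pi(u)+2\phi\nu(u)$ is, on the contact set, equal to $\pi(u)$, and to use the positive-reach identity $\pi(u)=u-\phi\nu(u)$. Then $D^2u=D^2(\phi\nu(u))+D^2\pi(u)$. The first term is bounded once $D(\nu(u))$ is Lipschitz; the second requires iterating with $\pi(u)$ solving a tangential harmonic-map-type system with $L^\infty$ right side.

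\textbf{Main obstacle.} I expect this final step — upgrading from $W^{2,p}$ to $W^{2,\infty}$ when $D^2d$ is merely $L^\infty$ and cannot be composed with $u$ on a full-measure set — to be the crux. There I would fall back on a direct quadratic-approximation (Caffarelli-type) argument: show $\sup_{B_s(y)}|u-\ell_y|\le Cs^2$ uniformly, using the scalar $C^{1,1}$ estimate for $\phi$ and the one-sided support inequality to control the normal component, and harmonic comparison to control the tangential one.
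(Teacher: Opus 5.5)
Up to the bounded reaction and $u\in W^{2,p}_{\rm loc}$ for every $p<\infty$, your route is in substance the paper's. The endpoint step, which is the real content of the theorem, is not established, and both ingredients you propose for it fail at the $C^{1,1}$ threshold.

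\textbf{The scalar $W^{2,\infty}$ input is false.} Scalar obstacle theory does \emph{not} give $\phi\in W^{2,\infty}_{\rm loc}$ for $\min(-\Delta\phi-f,\phi)=0$ when $f$ is merely bounded. Optimal $C^{1,1}$ regularity needs a modulus of continuity for $f$ (e.g.\ Dini); $f\in L^\infty$ only yields $W^{2,p}$, $p<\infty$. Already without contact, $w=r^2\log r\,\cos2\theta$ in $\R^2$ has $\Delta w=4\cos2\theta$ but $|D^2w|\sim|\log r|$, while $w+1>0$ near the origin.

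Your forcing is $f=-(Dq:DN+\phi|DN|^2)$, with $q=\pi(u)$ and $N=\nu(u)$. It contains the a.e.-defined curvature of $\Gamma$ and is genuinely discontinuous. Moreover $D^2\phi=D^2d(u)[Du,Du]+\nabla d(u)\cdot D^2u$, so $\phi\in W^{2,\infty}$ is equivalent to boundedness of $N\cdot D^2u$. That is half of what you want to prove, so it cannot be an input.

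\textbf{The splitting cannot be estimated term by term.} In $D^2u=D^2(\phi\,\nu(u))+D^2\pi(u)$, the factor $D(\nu(u))=D^2d(u)\,Du$ is not Lipschitz. In the free phase even $D(\pi(u))=D\pi(u)\,Du$ inherits the jumps of $D^2d$. So neither summand need be $C^1$; only their sum $u$ is regular.

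\textbf{The missing idea in your fallback.} Uniform quadratic approximation points the right way, but ``harmonic comparison'' for the tangential part hits the same obstruction: a merely bounded source gives no uniform Hessian control. What is missing is a rotating-normal cancellation. At a contact point $y$, set $N_y=N(y)$ and $P_y=I-N_y\otimes N_y$. Since $\mu$ is supported on $\cC=\{\phi=0\}$ and $N$ is Lipschitz in the source variable,
\[
 -\Delta(P_yu)=P_y\bigl(N(z)-N_y\bigr)\mu(z),
 \qquad
 \bigl|P_y\bigl(N(z)-N_y\bigr)\mu(z)\bigr|
 \le\|DN\|_\infty\|\mu\|_\infty|z-y|.
\]
A Newton potential whose density vanishes linearly at $y$ is twice differentiable there with bounded Hessian, because $|z-y|^{-n}\cdot|z-y|$ is integrable.

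\textbf{The normal part at contact points needs no scalar $C^{1,1}$ theory.} From $\phi\in W^{2,p}$ you get $D^2\phi=0$ a.e.\ on $\cC$. Differentiating $D_i\phi=N\cdot D_iu$ then yields $|N\cdot D^2u|\le|DN|\,|Du|$ there.

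\textbf{Free points.} At a free point $x$, work at scale $d=\dist(x,\cC)$ with a nearest contact point $y$.
\begin{itemize}
\item Tangential part: on $\supp\mu$ the same source bound is $\le C|z-x|$, and this controls $P_yD^2u(x)$.
\item Normal part: $H=N_y\cdot(u-u(y))$ is superharmonic on $B_{2d}(y)$, vanishes at $y$, and satisfies $H\ge-C|z-y|^2$ by the one-sided support inequality. Hence $\int_{B_{2d}(y)}|H|\le Cd^{n+2}$. Interior estimates for the harmonic function $H$ on $B_{d/2}(x)$ then bound $D^2H(x)=N_y\cdot D^2u(x)$.
\end{itemize}

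\textbf{Lesser points.}
\begin{itemize}
\item In the Lipschitz step, a merely Lipschitz projection does not suffice. The comparison $\int|D(u-h)|^2\le C\varepsilon^2\int|Du|^2$ needs $\operatorname{lip}\cR\le1+C\dist(\cdot,\overline M)$ together with the quadratic barycentric bound $\dist(h,\overline M)\le C\varepsilon^2$.
\item Passing from $C^{0,1/2}$ to Lipschitz needs a second Campanato iteration with summable errors.
\item Deriving the Euler system by differentiating $\pi_{\overline M}(u+s\psi)$ in $s$ is not legitimate for a merely Lipschitz projection. The paper avoids this by using graph variations.
\end{itemize}
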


We next show that the $C^{1,1}$ assumption on the target boundary cannot
be weakened within the H\"older scale, even for convex obstacles.

\begin{theorem}\label{thm:sharpness}
For every $0<\alpha<1$ and every source dimension $n\ge1$, there exist a compact convex body $K_\alpha\subset\R^2$ with
\[
 \partial K_\alpha\in C^{1,\alpha}\setminus C^{1,1},
\]
a bounded Lipschitz domain $\Omega\subset\R^n$, and a map
\[
 u\in C^{1,\alpha}(\overline\Omega;\R^2)
 \cap W^{1,2}(\Omega;\R^2\setminus\operatorname{int}K_\alpha)
\]
which globally minimizes the Dirichlet energy in its fixed Sobolev trace
class but does not belong to $C^{1,1}$ in any neighborhood of a specified
interior point.
\end{theorem}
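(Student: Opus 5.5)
The construction reduces to a one-dimensional (in the source) scalar obstacle problem in which the contact set is pinned to the curved boundary. I will build $K_\alpha$ so that near one boundary point $\partial K_\alpha$ is the graph $y_2=-g(y_1)$ with $g(t)=|t|^{1+\alpha}$ for $|t|\le\delta$. This profile is concave in $y_2$, so $K_\alpha$ sits below the graph and is convex. Away from that point, I close the curve up by a convex $C^\infty$ arc glued in a $C^{1,\alpha}$ way. Then $\partial K_\alpha\in C^{1,\alpha}\setminus C^{1,1}$, since $g''(t)\sim|t|^{\alpha-1}$ is unbounded.

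The candidate minimizer is a map that rides along the boundary. Take $\Omega=(-1,1)\times(0,1)^{n-1}$ and let $u$ depend only on $x_1$:
\[
u(x)=\bigl(\lambda x_1,\,-g(\lambda x_1)\bigr),
\]
with $\lambda>0$ small so that $\lambda|x_1|\le\delta$. Then $u$ takes values in $\Gamma$, it is $C^{1,\alpha}(\overline\Omega)$, and it is not $C^{1,1}$ near $0$ because its second component has second derivative $\lambda^2 g''(\lambda x_1)$, which is unbounded.

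The key step is minimality. I will argue via convexity rather than via an Euler equation, since the obstacle is $\operatorname{int}K_\alpha$ and the admissible set $\R^2\setminus\operatorname{int}K_\alpha$ is not convex. So I would instead modify the picture so that competitors can be compared by a one-sided bound.

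The cleanest route is to swap the roles: choose the allowed side so that the constraint reads as a scalar obstacle. Let $v$ be any competitor with the same trace. Its first component $v_1$ is unconstrained, so replacing $v_1$ by the affine map $\lambda x_1$ only lowers $\int|Dv_1|^2$, because the trace of $v_1$ is $\lambda x_1$. The constraint, however, couples $v_2$ to $v_1$, so this reduction is not automatic.

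I therefore plan to use the supporting-line structure of the convex body instead. The second component $w=v_2$ must satisfy $w\ge -g(v_1)$ locally, or $v$ must leave the neighborhood. Since $\overline M$ is the exterior of a convex set, the $1$-Lipschitz nearest-point retraction onto the sublevel region fails, so I expect this step to be the main obstacle.

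To handle it, I will make $u$ a harmonic-type object. I seek $u$ such that $u$ is a minimizer of the \emph{relaxed} problem with target the complement of the open half-plane $H=\{y_2< -g(\lambda x_1)\}$, which varies with the source point. Concretely, I intend to prove directly that
\[
E(v)-E(u)\ge0
\]
via the identity
\[
E(v)-E(u)=\int|D(v-u)|^2+2\int Du\cdot D(v-u).
\]
Integrating by parts, the cross term equals
\[
-2\int \Delta u\cdot(v-u)=-2\int \lambda^2\bigl(0,-g''(\lambda x_1)\bigr)\cdot(v-u).
\]
Since $g''\ge0$, this becomes $2\lambda^2\int g''(\lambda x_1)(v_2-u_2)$. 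This expression is an $L^1$ pairing because $g''\in L^1$ for $\alpha>0$.

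The remaining task is to show that $\int g''(\lambda x_1)(v_2-u_2)\ge -\tfrac{1}{2\lambda^2}\int|D(v-u)|^2$. Near $x_1=0$, with $v$ outside $K_\alpha$, concavity of $-g$ gives
\[
v_2\ge -g(v_1)\ge -g(u_1)-g'(u_1)(v_1-u_1)-C\,|v_1-u_1|^{1+\alpha}
\]
only when $v$ stays near the arc. Nearby points below the arc lie inside $K_\alpha$, so $v$ must go around the body to get there, which costs energy at least a fixed constant.

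I will therefore split the domain into two regions: where $|v-u|$ is small, and its complement, which is controlled by Poincaré and a capacity lower bound. In the small region, a Hardy/Poincaré inequality in $x_1$ absorbs the remainder terms, with $\lambda$ chosen small. If this absorption fails, my fallback is to choose $K_\alpha$ so that $u$ lies on the convex side: take $\overline M$ locally equal to the epigraph region, so that $u$ touches a boundary which is concave as seen from $M$. Then a supporting-hyperplane (Lewy–Stampacchia) inequality $v_2-u_2\ge-g'(u_1)(v_1-u_1)$ gives the sign directly. The term linear in $v_1-u_1$ integrates by parts against the harmonic first component and vanishes.
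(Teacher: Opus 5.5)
Your candidate map is neither a minimizer nor a critical point, so no splitting or absorption argument can prove its minimality. The graph parametrization $u(x)=(\lambda x_1,-g(\lambda x_1))$ moves along $\Gamma$ with non-constant speed $\lambda\sqrt{1+g'(\lambda x_1)^2}$, and reparametrizing along $\Gamma$ lowers the energy. Take $0\le\psi\in C_c^\infty((0,1))$ and $0\le\zeta\in C_c^\infty((0,1)^{n-1})$, both nonzero (omit $\zeta$ if $n=1$). Set $\phi_s:=x_1+s\psi(x_1)\zeta(x')$ and $v_s:=(\lambda\phi_s,-g(\lambda\phi_s))$. Then $v_s$ takes values in $\Gamma$, $v_s-u$ has compact support, and
\[
 \frac{d}{ds}\Big|_{s=0}E(v_s)
 =-2\lambda^3\int_\Omega g'(\lambda x_1)\,g''(\lambda x_1)\,\psi(x_1)\zeta(x')\,dx<0 .
\]
Equivalently, $\Delta u=(0,-\lambda^2g''(\lambda x_1))$ is vertical, while the normal to $\Gamma$ at $u(x)$ is parallel to $(g'(\lambda x_1),1)$. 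So the Euler equation $-\Delta u=N\eta$ fails wherever $x_1\ne0$.

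This is why the inequality you set as the remaining task, $\int g''(\lambda x_1)(v_2-u_2)\ge-\frac1{2\lambda^2}\int|D(v-u)|^2$, is false. The tangent-line expansion of $-g(v_1)$ produces the linear term $-\int g''(\lambda x_1)g'(\lambda x_1)(v_1-u_1)$, which has no sign and cannot be absorbed by a quadratic quantity. Your fallback does not repair this. The supporting-line inequality $v_2-u_2\ge-g'(u_1)(v_1-u_1)$ holds on $\overline M$ only if $\overline M$ lies above its tangent lines, i.e.\ is locally convex. That is incompatible with $K_\alpha$ convex unless $\Gamma$ is flat. Moreover, the $g''g'$-weighted linear term does not vanish by harmonicity of $u_1$.

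The missing idea is that minimality here is global, not first-order. It comes from projecting onto the convex body $K_\alpha$ itself, not onto $\overline M$. The nearest-point projection $P_K$ onto $K_\alpha$ is $1$-Lipschitz and maps $\R^2\setminus\operatorname{int}K_\alpha$ onto $\partial K_\alpha$. Instead take $u(x)=\gamma(x_1)$, where $\gamma:[-1,1]\to\partial K_\alpha$ is the \emph{constant-speed} parametrization of the short boundary arc through the point where $\Gamma$ fails to be $C^{1,1}$. Choose its endpoints $A,B$ close enough that this arc realizes $\ell=d_{\partial K_\alpha}(A,B)$.

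For a competitor $v$ with the same trace, almost every slice $v(\cdot,x')$ is an absolutely continuous curve from $A$ to $B$ in the allowed set. Hence $\operatorname{Length}(v(\cdot,x'))\ge\operatorname{Length}(P_K\circ v(\cdot,x'))\ge\ell$. Cauchy--Schwarz then gives $\int_{-1}^1|\partial_1v(\cdot,x')|^2\ge\ell^2/2$, with equality for $u$. Integrating in $x'$ and discarding the transverse energy proves minimality.

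The regularity claims must then be checked for this arclength parametrization rather than the graph one. It is $C^{1,\alpha}$. It is not $C^{1,1}$ near source points mapped to the non-$C^{1,1}$ boundary point, because $ds/dX$ is bounded above and below, so a Lipschitz unit tangent in $s$ would make $g'$ Lipschitz. Your local profile $-|t|^{1+\alpha}$ can be kept; alternatively, use the explicit body $\{|X|^{1+\alpha}+|Y|^{1+\alpha}\le1\}$.
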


The counterexample in \cref{thm:sharpness} shows that, for every
$0<\alpha<1$, $C^{1,\alpha}$ regularity of the target boundary alone
does not guarantee local $C^{1,1}$ regularity of minimizing constraint
maps. Moreover, since the H\"older modulus $\omega(r)=r^\alpha$
satisfies
\[
 \int_0^1 \frac{\omega(r)}{r}\,dr
 =\int_0^1 r^{\alpha-1}\,dr
 =\frac{1}{\alpha}<\infty,
\]
the same counterexample shows that $C^1$--Dini regularity alone is also
insufficient.

The proof is organized around four structural propositions.
\Cref{sec:geometry} collects the required $C^{1,1}$ geometry of the target
and constructs an asymptotically nonexpansive retraction.
\Cref{sec:lipschitz} combines harmonic replacement with two Campanato
iterations to upgrade continuity to local Lipschitz regularity.
\Cref{sec:reaction} derives both a bounded scalar reaction and the vector
Euler equation.  \Cref{sec:hessian} combines contact-set identities with
a rotating-normal cancellation estimate for Newton potentials to establish
the endpoint Hessian bound.  All auxiliary estimates are incorporated into
these four arguments, keeping the main proof chain transparent.
\Cref{sec:sharpness} constructs the sharpness examples.

\section{\texorpdfstring{$C^{1,1}$}{C1,1} target geometry}\label{sec:geometry}

Throughout this section, $\Gamma$ is a compact embedded $C^{1,1}$
hypersurface.  Its unit normal $N$ points into the allowed side
$\overline M$.  Define the signed distance by
\begin{equation}\label{eq:signed-distance}
 \rho(y):=
 \begin{cases}
  \dist(y,\Gamma),&y\in\overline M,\\
  -\dist(y,\Gamma),&y\in\operatorname{int}K.
 \end{cases}
\end{equation}
For $\delta>0$ set
\[
 \begin{aligned}
 \cT_\delta
 &:=\{y\in\R^m:\dist(y,\Gamma)<\delta\},\\
 \cT_\delta^+
 &:=\cT_\delta\cap\overline M
   =\{y\in\cT_\delta:\rho(y)\ge0\},\\
 \cT_\delta^-
 &:=\cT_\delta\cap\operatorname{int}K
   =\{y\in\cT_\delta:\rho(y)<0\}.
 \end{aligned}
\]
Thus $\Gamma\subset\cT_\delta^+$, $\cT_\delta^+$ is the allowed part of
the tube, and $\cT_\delta^-$ is the forbidden part.  All constants below
depend only on a fixed finite $C^{1,1}$ atlas of $\Gamma$.

The construction below has a simple purpose.  In a sufficiently small
tubular neighborhood we leave allowed points fixed and project forbidden
points onto $\Gamma$.  The retraction $\cR$ defined in
\eqref{eq:retraction} is locally Lipschitz and satisfies the asymptotic
estimate \eqref{eq:retraction-lip}; in particular, its pointwise Lipschitz
constant tends to one as a forbidden point approaches $\Gamma$.
After a rigid motion, each sufficiently small neighborhood of
$p\in\Gamma$ may be written as
\begin{equation}\label{eq:graph-side}
 \Gamma=\{(z,g(z))\},
 \qquad
 \overline M=\{(z,s):s\ge g(z)\},
\end{equation}
where $g\in C^{1,1}$, with a uniform bound on its $C^{1,1}$ norm.

\begin{proposition}\label{prop:geometry}
There exist $\delta_0>0$ and $C_\Gamma<\infty$ with the following properties.

\begin{enumerate}[label=\textup{(\roman*)}]
\item Every $y\in\cT_{\delta_0}$ has a unique nearest point
$\Pi(y)\in\Gamma$.  The signed distance $\rho$, and the maps $\Pi$ and
$N\circ\Pi$, are Lipschitz on $\cT_{\delta_0}$, and
\begin{equation}\label{eq:normal-coordinates}
 y=\Pi(y)+\rho(y)N(\Pi(y)).
\end{equation}

\item If $p,q\in\Gamma$ and $|p-q|<\delta_0$, then
\begin{equation}\label{eq:quadratic-support}
 N(p)\cdot(q-p)\ge -C_\Gamma|q-p|^2,
 \qquad
 |N(q)-N(p)|\le C_\Gamma|q-p|.
\end{equation}

\item The map
\begin{equation}\label{eq:retraction}
 \cR(y):=
 \begin{cases}
  y,&\rho(y)\ge0,\\
  \Pi(y),&\rho(y)<0,
 \end{cases}
 \qquad y\in\cT_{\delta_0},
\end{equation}
is a locally Lipschitz retraction onto $\overline M\cap\cT_{\delta_0}$ and satisfies
\begin{equation}\label{eq:retraction-lip}
 \operatorname{lip}\cR(y)
 \le 1+C_\Gamma\dist(y,\overline M).
\end{equation}
Consequently, for every $h\in W^{1,2}(U;\cT_{\delta_0})$,
\begin{equation}\label{eq:retraction-chain}
 |D(\cR\circ h)|
 \le \bigl(1+C_\Gamma\dist(h,\overline M)\bigr)|Dh|
 \quad\text{a.e. in }U.
\end{equation}
\end{enumerate}
\end{proposition}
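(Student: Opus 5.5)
The plan is to work entirely in the local graph charts \eqref{eq:graph-side}, obtain the estimates there with constants depending only on the $C^{1,1}$ norm of $g$, and patch them together by compactness of $\Gamma$ using a Lebesgue number of the finite atlas. First I would prove (ii), since everything else rests on it. Write $p=(z_0,g(z_0))$ and $q=(z,g(z))$, so that $N(p)$ is a positive multiple of $(-\nabla g(z_0),1)$ and
\[
N(p)\cdot(q-p)=\frac{g(z)-g(z_0)-\nabla g(z_0)\cdot(z-z_0)}{\sqrt{1+|\nabla g(z_0)|^2}}\ge -\tfrac12\Lip(\nabla g)\,|z-z_0|^2 .
\]
This gives the one-sided support inequality. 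The Lipschitz bound on $N$ follows because $\xi\mapsto(-\xi,1)/\sqrt{1+|\xi|^2}$ is smooth with bounded derivative and $|z-z_0|\le|q-p|$.

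For (i), I would use (ii) to show that $\Gamma$ has positive reach: it satisfies a uniform interior and exterior ball condition of radius $r_0\sim 1/C_\Gamma$, after shrinking so that pairs of points in different charts are uniformly far apart relative to $\delta_0$. Uniqueness of the nearest point then goes as follows. Suppose $p\ne q$ are both nearest to $y$, with $|y-p|=|y-q|=d<\delta_0$. Then $y-p=\pm d\,N(p)$ by first-order optimality. Expanding $|y-q|^2=|y-p|^2$ gives $2(y-p)\cdot(q-p)=|q-p|^2$, and combining this with the support inequality applied with the correct sign yields $|q-p|^2\le 2dC_\Gamma|q-p|^2$, a contradiction when $d<1/(2C_\Gamma)$. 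Here one must also use the inequality with roles reversed, that is, support from the interior side. That version holds by the same computation with $-N$, since the graph estimate is two-sided.

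Identity \eqref{eq:normal-coordinates} is immediate from the same first-order condition and the sign convention for $\rho$. The Lipschitz property of $\Pi$ on $\cT_{\delta_0}$, for $\delta_0$ smaller than the reach $r_0$, is Federer's estimate $\Lip(\Pi|_{\cT_\delta})\le r_0/(r_0-\delta)$ \cite{Federer1959,LeobacherSteinicke2021}. I would reprove it quickly by applying the support inequalities at $\Pi(y)$ and $\Pi(y')$ to the vector $\Pi(y')-\Pi(y)$. Then $\rho$ is $1$-Lipschitz, since $\dist(\cdot,\Gamma)$ is $1$-Lipschitz and $\rho$ changes sign only across $\Gamma$. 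Finally $N\circ\Pi$ is Lipschitz as a composition.

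For (iii), $\cR$ is continuous because $\Pi=\mathrm{id}$ on $\Gamma$, and it is a retraction onto $\overline M\cap\cT_{\delta_0}$. The key estimate is that for $y,y'$ in a small ball contained in $\cT^-_{\delta_0}$,
\[
|\Pi(y)-\Pi(y')|\le \frac{r_0}{r_0-|\rho(y)|}\,|y-y'|(1+o(1)).
\]
This gives $\operatorname{lip}\Pi(y)\le 1+C|\rho(y)|=1+C\dist(y,\overline M)$. On the interior of $\{\rho\ge0\}$ we have $\operatorname{lip}\cR=1$.

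The main obstacle I expect is points where the pair $y,y'$ straddles $\Gamma$. For these I would argue along the segment $[y,y']$: it meets $\Gamma$, and $\cR$ is piecewise either the identity or $\Pi$ along the segment. Summing the local bounds and using that $\dist(\cdot,\overline M)\le|\cdot-y|+\dist(y,\overline M)$ gives $\operatorname{lip}\cR(y)\le 1+C_\Gamma\dist(y,\overline M)$ in all cases, including $y\in\Gamma$, where the bound is $1$.

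The chain rule \eqref{eq:retraction-chain} follows from the Lipschitz chain rule for Sobolev maps. At almost every $x$, $h$ is approximately differentiable, and
\[
|D(\cR\circ h)(x)|\le \operatorname{lip}\cR(h(x))\,|Dh(x)|,
\]
which is valid for merely Lipschitz $\cR$ via the metric-differential and upper-gradient formulation, so no differentiability of $\cR$ at $h(x)$ is needed. Inserting the bound from (iii) completes the argument.
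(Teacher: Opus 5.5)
Your proposal is correct. It matches the paper on (ii) and on the Lipschitz bound for $\Pi$: your reproof of Federer's estimate via support inequalities at $\Pi(y)$ and $\Pi(y')$ is exactly the paper's inequality $(x-y)\cdot(q-p)\ge(1-2C_\Gamma r)|q-p|^2$. It departs from the paper in three places.

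\emph{Uniqueness of the nearest point.} The paper cites the Leobacher--Steinicke positive-reach theorem. You derive uniqueness directly: two nearest points $p\ne q$ give $2(y-p)\cdot(q-p)=|q-p|^2$ with $y-p=\pm dN(p)$. This contradicts $|N(p)\cdot(q-p)|\le C_\Gamma|q-p|^2$ once $2C_\Gamma d<1$, which makes (i) self-contained.

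\emph{Pairs straddling $\Gamma$.} The paper proves two explicit cross-interface inequalities. One is $|z-p|^2\ge(1-2C_\Gamma d)|q-p|^2+d^2$ for $p\in\Gamma$. The other is the analogous bound for an allowed point $y$, which needs a one-sided support inequality for all of $\overline M$, not just $\Gamma$; the paper gets it from $s\ge g(w)$ in a graph chart. You use only two ingredients: continuity of $\cR$ across $\Gamma$, and the one-sided pointwise bounds ($\cR=\mathrm{id}$ on $\{\rho\ge0\}$, $\operatorname{lip}\Pi\le 1+C|\rho|$ on $\{\rho<0\}$) along the segment $[y,y']$. Your route is softer and avoids the support inequality off $\Gamma$. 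The paper's route buys an explicit two-point bound $(1-2C_\Gamma d)^{-1/2}$ for any allowed/forbidden pair in a common chart, with no segment needed.

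When you write your version out, note that the forbidden part of the segment can consist of countably many open intervals. So ``summing the local bounds'' should be done by chaining through interval endpoints, which lie on $\Gamma$ where $\cR$ is continuous. This gives the constant $1+C_\Gamma\bigl(\dist(y,\overline M)+|y-y'|\bigr)$ directly.

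\emph{Chain rule \eqref{eq:retraction-chain}.} The paper uses ACL representatives and a coordinate-line estimate; you use approximate differentiability. The argument at a point of approximate differentiability of both $h$ and $\cR\circ h$ gives the directional inequality $|D(\cR\circ h)(x)e|\le\operatorname{lip}\cR(h(x))\,|Dh(x)e|$ for every unit vector $e$. State it in that form and sum over $e=e_i$; that is what yields the Frobenius-norm bound the paper uses later.
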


\begin{proof}
\emph{Step 1: quadratic support and tubular coordinates.}
In a graph chart as in \eqref{eq:graph-side}, the Lipschitz continuity of $Dg$ gives
\[
 |g(z)-g(\zeta)-Dg(\zeta)\cdot(z-\zeta)|
 \le \frac{\Lip(Dg)}2|z-\zeta|^2.
\]
Writing the normal at $(\zeta,g(\zeta))$ as
\[
 \frac{(-Dg(\zeta),1)}{\sqrt{1+|Dg(\zeta)|^2}}
\]
yields, after using both orders of the two points,
\begin{equation}\label{eq:two-sided-support}
 |N(p)\cdot(q-p)|\le C_\Gamma|q-p|^2,
 \qquad
 |N(q)-N(p)|\le C_\Gamma|q-p|
\end{equation}
whenever $p,q\in\Gamma$ are sufficiently close.  A finite atlas makes the radius and the constants uniform.  This proves (ii).

By the positive-reach theorem for compact $C^{1,1}$ submanifolds
\cite[Proposition~6]{LeobacherSteinicke2021}, there is $\delta_*>0$
such that every point of $\cT_{\delta_*}$ has a unique nearest point on
$\Gamma$.  If $q=\Pi(y)$, first variation of the squared distance in a
$C^1$ graph chart gives
\[
 y=q+tN(q),
 \qquad
 |t|=\dist(y,\Gamma).
\]
The chosen side fixes the sign of $t$, and uniqueness of the projection
gives
\begin{equation}\label{eq:normal-ray-projection}
 \Pi\bigl(q+tN(q)\bigr)=q
 \qquad (q\in\Gamma,\ |t|<\delta_*).
\end{equation}

Shrink to $0<\delta_0<\delta_*$ so that sufficiently close points of
$\Gamma$ lie in a common graph chart and
\[
 2C_\Gamma\delta_0<\frac12.
\]
For $x,y\in\cT_{\delta_0}$ write
\[
 q:=\Pi(x),
 \qquad
 x=q+sN(q),
 \qquad
 p:=\Pi(y),
 \qquad
 y=p+tN(p),
 \qquad
 |s|,|t|\le r<\delta_0.
\]
If $p$ and $q$ lie in a common chart, \eqref{eq:two-sided-support} gives
\[
 (x-y)\cdot(q-p)
 \ge \bigl(1-2C_\Gamma r\bigr)|q-p|^2,
\]
and consequently
\begin{equation}\label{eq:projection-tube-estimate}
 |\Pi(x)-\Pi(y)|
 \le \frac{1}{1-2C_\Gamma r}|x-y|.
\end{equation}
A Lebesgue number for a finite relatively compact subatlas gives
$\sigma>0$ such that projections outside a common chart satisfy
$|p-q|\ge\sigma$.  After a further decrease of $\delta_0$,
$|x-y|\ge\sigma/2$ for such pairs, and compactness of $\Gamma$ gives the
same uniform Lipschitz bound.  Hence $\Pi$ is Lipschitz on the whole
tube.

By \eqref{eq:signed-distance}, $\rho$ is $1$-Lipschitz when both points
lie on the same side of $\Gamma$.  If $x$ and $y$ lie on opposite sides,
choose $\zeta\in[x,y]\cap\Gamma$.  Then
\[
 |\rho(x)-\rho(y)|
 =\dist(x,\Gamma)+\dist(y,\Gamma)
 \le |x-\zeta|+|y-\zeta|
 =|x-y|.
\]
Thus $\rho$ is $1$-Lipschitz on the whole tube.
Since $N$ and $\Pi$ are Lipschitz, so is $N\circ\Pi$.  Thus
$y\mapsto(\Pi(y),\rho(y))$ and its inverse
\[
 (q,t)\longmapsto q+tN(q)
\]
are Lipschitz by \eqref{eq:normal-ray-projection}, completing (i).

\medskip
\noindent\emph{Step 2: the asymptotically nonexpansive retraction.}
The upper pointwise Lipschitz constant is defined by
\[
 \operatorname{lip}\cR(y)
 :=\limsup_{z\to y,\ z\ne y}
   \frac{|\cR(z)-\cR(y)|}{|z-y|}
\]
Decrease $\delta_0$ further if necessary.  Let
\[
 z\in\cT_{\delta_0}^-,
 \qquad q:=\Pi(z),
 \qquad d:=\dist(z,\Gamma)=-\rho(z)>0.
\]
By \eqref{eq:normal-coordinates}, $z=q-dN(q)$.  Moreover,
$\cR=\Pi$ in a neighborhood of $z$.  If $d<r<\delta_0$, then all points in a sufficiently small neighborhood of $z$ have distance less than $r$ from $\Gamma$.  Applying \eqref{eq:projection-tube-estimate} there and then letting $r\downarrow d$ gives
\begin{equation}\label{eq:retraction-forbidden-lip}
 \operatorname{lip}\cR(z)
 \le \frac{1}{1-2C_\Gamma d}
 \le 1+4C_\Gamma d.
\end{equation}
Enlarge $C_\Gamma$ once and for all to absorb the factor four.

We now estimate $\cR$ between a boundary point and a point on the
forbidden side.  Let $p\in\Gamma$ and let
$z=q-dN(q)\in\cT_{\delta_0}^-$, where
$q=\Pi(z)$ and $d=-\rho(z)>0$.  Then
\eqref{eq:quadratic-support}, with $p$ and $q$ interchanged, gives
\[
 N(q)\cdot(q-p)\le C_\Gamma|q-p|^2.
\]
Consequently,
\[
 |z-p|^2
 =|q-p|^2-2dN(q)\cdot(q-p)+d^2
 \ge(1-2C_\Gamma d)|q-p|^2+d^2.
\]
Since $\cR(z)=q$ and $\cR(p)=p$, this implies
\begin{equation}\label{eq:retraction-interface}
 \limsup_{\substack{z\to p\\ z\in\cT_{\delta_0}^-}}
 \frac{|\cR(z)-\cR(p)|}{|z-p|}
 \le1.
\end{equation}
If $y\in\cT_{\delta_0}^+\setminus\{p\}$, then
$\cR(y)=y$ and $\cR(p)=p$, and therefore
\[
 \frac{|\cR(y)-\cR(p)|}{|y-p|}=1.
\]
Together with \eqref{eq:retraction-interface}, this proves
$\operatorname{lip}\cR(p)\le1$ for every $p\in\Gamma$.

It remains to estimate pairs lying strictly on opposite sides of
$\Gamma$.  Let
\[
 y=(w,s)\in\cT_{\delta_0}^+,
 \qquad
 z\in\cT_{\delta_0}^-,
\]
and assume that $y$ and $q:=\Pi(z)$ lie in the same graph chart.  Write
\[
 q=(\zeta,g(\zeta)),
 \qquad d:=-\rho(z)>0,
 \qquad z=q-dN(q).
\]
Since $y$ lies on the allowed side, $s\ge g(w)$.  The graph
representation and the $C^{1,1}$ Taylor estimate therefore give
\[
 \begin{aligned}
 N(q)\cdot(y-q)
 &\ge
 \frac{g(w)-g(\zeta)-Dg(\zeta)\cdot(w-\zeta)}
      {\sqrt{1+|Dg(\zeta)|^2}}\\
 &\ge-C_\Gamma|w-\zeta|^2
 \ge-C_\Gamma|y-q|^2,
 \end{aligned}
\]
Hence
\[
 |y-z|^2
 =|y-q|^2+2dN(q)\cdot(y-q)+d^2
 \ge(1-2C_\Gamma d)|y-q|^2+d^2.
\]
Thus
\begin{equation}\label{eq:retraction-cross-interface}
 |\cR(y)-\cR(z)|
 =|y-q|
 \le(1-2C_\Gamma d)^{-1/2}|y-z|.
\end{equation}
After another uniform decrease of $\delta_0$ and an enlargement of
$C_\Gamma$, \eqref{eq:retraction-forbidden-lip},
\eqref{eq:retraction-interface}, and
\eqref{eq:retraction-cross-interface}, together with the fact that $\cR$
is the identity on the allowed side, prove local Lipschitz continuity
across the interface and the bound \eqref{eq:retraction-lip}.

For every $0<\delta<\delta_0$, the bounds
\eqref{eq:retraction-forbidden-lip}--\eqref{eq:retraction-cross-interface}
are uniform on $\cT_\delta$.  Hence $\cR$ is uniformly Lipschitz there and
extends as a Lipschitz map to $\overline{\cT_\delta}$.
Now use the absolutely continuous-on-lines representatives of $h$; see
\cite[Theorem~2.1.4]{Ziemer}.  The composition $\cR\circ h$ is absolutely
continuous on almost every coordinate line, and
at almost every point of such a line,
\[
 |\partial_i(\cR\circ h)(x)|
 \le \operatorname{lip}\cR(h(x))|\partial_i h(x)|.
\]
Squaring and summing over $i$ proves \eqref{eq:retraction-chain}, including
on sets mapped into $\{\rho=0\}$.
\end{proof}

\section{From continuity to local Lipschitz regularity}\label{sec:lipschitz}

\begin{proposition}\label{prop:lipschitz}
Let $u$ satisfy the hypotheses of \cref{thm:main}.  If $x_0\in\Reg_0(u)$, then there exists $R>0$ such that
\[
 u\in W^{1,\infty}(B_R(x_0);\R^m).
\]
\end{proposition}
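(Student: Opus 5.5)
We use harmonic replacement together with the retraction $\cR$ of \cref{prop:geometry}(iii), followed by two Campanato iterations. Since $x_0\in\Reg_0(u)$, we can fix $R_0>0$ so that the oscillation of $u$ on $B_{2R_0}(x_0)$ is smaller than a small parameter $\varepsilon\ll\delta_0$. There are two cases. If $\dist(u(x_0),\Gamma)$ is positive (in the essential sense), then after shrinking $R_0$ the map $u$ takes values in the open set $M$, where the constraint is inactive. Competitors $u+t\varphi$ are then admissible for small $t$, so $u$ is harmonic and the claim is immediate. Hence we may assume $u(B_{2R_0}(x_0))\subset\cT_{\varepsilon}^+$ with $\varepsilon<\delta_0/4$.

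Fix a ball $B_r(y)\subset B_{R_0}(x_0)$ and let $h$ be the harmonic function with $h-u\in W^{1,2}_0(B_r(y))$. By the maximum principle applied componentwise, and by convexity of balls, $h$ takes values in the convex hull of $u(\partial B_r)$. This set lies in a ball of radius $\varepsilon$ around a point of $\overline M$, so $h\in\cT_{\delta_0}$ and $\dist(h,\overline M)\le \osc_{B_r(y)}u=: \omega(r)$. The map $\cR\circ h$ is then an admissible competitor with the same trace. Minimality and \eqref{eq:retraction-chain} give
\[
\int_{B_r}|Du|^2\le (1+C_\Gamma\omega(r))^2\int_{B_r}|Dh|^2 .
\]
Because $h$ is harmonic with the same boundary values, the Dirichlet principle gives $\int_{B_r}|D(u-h)|^2=\int|Du|^2-\int|Dh|^2\le C\omega(r)\int_{B_r}|Du|^2$. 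The standard comparison then yields, for $0<\tau<1$,
\[
\int_{B_{\tau r}}|Du|^2\le C\bigl(\tau^n+\omega(r)\bigr)\int_{B_r}|Du|^2 .
\]

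\emph{First iteration.} Since $\omega(r)\to0$, the usual iteration lemma gives $\int_{B_\rho}|Du|^2\le C\rho^{n-2+2\beta}$ for every $\beta<1$, uniformly for centers near $x_0$. Morrey's lemma then makes $u$ locally $C^{0,\beta}$, so $\omega(r)\le Cr^\beta$.

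\emph{Second iteration.} We run the Campanato version for the excess $\int_{B_\rho}|Du-(Du)_\rho|^2$. Using the decay of harmonic gradients,
\[
\int_{B_{\tau r}}|Du-(Du)_{\tau r}|^2\le C\tau^{n+2}\int_{B_r}|Du-(Du)_r|^2+C\,r^\beta\int_{B_r}|Du|^2 .
\]
From the first iteration, the error term is $\le C r^{n-2+3\beta}$. Choosing $\beta>2/3$ makes the exponent exceed $n$, and iterating gives $Du\in\mathcal L^{2,n+2\gamma}$. Hence $Du$ is Hölder continuous, and in particular bounded on $B_R(x_0)$.

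The main obstacle is the error bound $\int|D(u-h)|^2\le C\omega(r)\int_{B_r}|Du|^2$, specifically its application in the second iteration. In that step the error must be controlled by a power strictly larger than $r^n$, and this requires the bootstrap through the first iteration. The delicate point is to check that $h$ stays inside the tube $\cT_{\delta_0}$, so that \eqref{eq:retraction-chain} applies; we take this from the convex-hull maximum principle. We also need the quadratic gain $(1+C\omega)^2-1\lesssim\omega$, which holds because \eqref{eq:retraction-lip} has the form $1+C\dist(\cdot,\overline M)$ and not merely a bounded Lipschitz constant. If the $C^{1,\gamma}$ Campanato step turned out to be troublesome, the fallback is to iterate the first estimate only. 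Even so, bounded $Du$ requires the excess decay, so we expect to need the second iteration anyway.
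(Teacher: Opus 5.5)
Your proof is correct, and it takes a genuinely different route from the paper's to make the comparison error small enough.

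\emph{The paper's route.} It first proves a quadratic barycentric penetration estimate. In a $C^{1,1}$ graph chart, an average of feasible points whose support has diameter $\varepsilon$ lies within $C\varepsilon^2$ of $\overline M$. Hence the harmonic replacement satisfies $\int|D(u-h)|^2\le C\varepsilon^2\int|Du|^2$. Because this error is squared, the paper stops the first iteration at $E(a,r)\le C_0r^{n-1}$, which is only a $C^{0,1/2}$ modulus. It then iterates the normalized energy $\Phi(a,r)=r^{-n}E(a,r)$ with factors $1+CLr^{1/2}$. The convergent product bounds $\fintavg_{B_r}|Du|^2$ uniformly and gives $Du\in L^\infty$ directly, with no H\"older continuity of $Du$.

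\emph{Your route.} You use only the trivial linear bound $\dist(h,\overline M)\le\omega(r)$, together with the exact leading constant $1$ in \eqref{eq:retraction-lip}. You compensate by pushing the first iteration to an exponent $\beta>2/3$. This is legitimate, since $\omega\le\varepsilon$ uniformly on all sub-balls and $R_0$ may be chosen depending on $\beta$. The error $Cr^{n-2+3\beta}$ is then supercritical in the Campanato excess iteration.

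\emph{Comparison.} Your argument dispenses with graph charts and the barycenter lemma, and it yields the stronger intermediate conclusion $Du\in C^{0,\gamma}$. The paper's argument needs only a $C^{0,1/2}$ modulus and an elementary product estimate.

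Two small remarks. First, the gain $(1+C\omega)^2-1\lesssim\omega$ is linear, not quadratic. Second, your closing claim that bounded $Du$ requires excess decay is not accurate. The paper's $\Phi$-iteration would also work with your linear error: take $\delta=\omega^{1/2}$, so the factors become $1+C\omega^{1/2}\le1+Cr^{\beta/2}$, which is summable along geometric radii.
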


\begin{proof}
\emph{Step 1: quadratic penetration of barycenters.}
Choose the graph chart \eqref{eq:graph-side} over a convex base and work
in a smaller graph cylinder compactly contained in that chart.  Let
$\lambda$ be a probability measure supported on feasible points $(z,s)$
in this smaller cylinder whose diameter is at most $\varepsilon$, and
write
\[
 (\bar z,\bar s)=\int(z,s)\dd\lambda.
\]
Then
\begin{equation}\label{eq:barycentric-penetration}
 \bar s-g(\bar z)\ge-C_\Gamma\varepsilon^2.
\end{equation}
In particular,
\[
 \dist((\bar z,\bar s),\overline M)
 \le C_\Gamma\varepsilon^2.
\]
Because the support of $\lambda$ lies in the allowed graph side
$\overline M=\{(z,s):s\ge g(z)\}$, one has $s\ge g(z)$ there.  The
$C^{1,1}$ Taylor estimate yields
\[
 g(z)\ge g(\bar z)+Dg(\bar z)\cdot(z-\bar z)
       -\frac{\Lip(Dg)}2|z-\bar z|^2.
\]
Integrating and using $\int(z-\bar z)\dd\lambda=0$ proves
\eqref{eq:barycentric-penetration}.  If
$\bar s\ge g(\bar z)$, then $(\bar z,\bar s)\in\overline M$.
If $\bar s<g(\bar z)$, the vertical segment from
$(\bar z,\bar s)$ to $(\bar z,g(\bar z))\in\Gamma$ has length
$g(\bar z)-\bar s$.  Hence in both cases
\[
 \dist((\bar z,\bar s),\overline M)
 \le \bigl(g(\bar z)-\bar s\bigr)_+
 \le C_\Gamma\varepsilon^2,
\]
which proves the asserted distance estimate.

\medskip
\noindent\emph{Step 2: harmonic replacement with quadratic error.}
This comparison is the only use of minimality in the proposition.  There
exist $\varepsilon_*>0$ and $C<\infty$ with the following property.  If
$B_r(a)\Subset\Omega$,
\[
 \operatorname{diam}\operatorname{ess\,ran}(u|_{B_r(a)})
 \le\varepsilon\le\varepsilon_*,
\]
 and this range lies either in a convex ball $Q_{\rm int}\Subset M$ or in a
convex ball $Q\Subset\cT_{\delta_0}$ contained in one target graph chart,
then the harmonic replacement $h$ of $u$ in $B_r(a)$ satisfies
\[
 h-u\in W^{1,2}_0(B_r(a);\R^m),
 \qquad
 \Delta h=0,
\]
and
\begin{equation}\label{eq:harmonic-comparison}
 \int_{B_r(a)}|D(u-h)|^2
 \le C\varepsilon^2\int_{B_r(a)}|Du|^2.
\end{equation}
Let $S:=\overline{\operatorname{ess\,ran}(u|_{B_r(a)})}$.  The Sobolev
trace commutes with Lipschitz compositions; see
\cite[Chapters~13 and~18]{Leoni2017}.  Applied to
$F(y):=\dist(y,S)$, this gives
\[
 \dist(\Tr u,S)=F(\Tr u)=\Tr(F\circ u)=0
 \quad\text{a.e. on }\partial B_r(a).
\]
If $n\ge2$, the Poisson representation is
\[
 h(x)=\int_{\partial B_r(a)}P_{B_r(a)}(x,\xi)\Tr u(\xi)\dd\sigma(\xi),
 \qquad
 P_{B_r(a)}(x,\xi)
 =\frac{r^2-|x-a|^2}
        {|\mathbb S^{n-1}|r|x-\xi|^n},
\]
with
\[
 P_{B_r(a)}(x,\xi)\ge0,
 \qquad
 \int_{\partial B_r(a)}P_{B_r(a)}(x,\xi)\dd\sigma(\xi)=1.
\]
 For $n\ge2$ and each $x\in B_r(a)$, define the probability measure
 \[
  \dd\lambda_x(\xi)
  :=P_{B_r(a)}(x,\xi)\dd\sigma(\xi)
  \quad\text{on }\partial B_r(a).
 \]
 Then
 \[
  h(x)=\int_{\partial B_r(a)}\Tr u(\xi)\dd\lambda_x(\xi),
 \]
 and the push-forward measure $(\Tr u)_\#\lambda_x$ is a probability
 measure supported in $S$.  When $n=1$, writing
 $B_r(a)=(a-r,a+r)$, the harmonic replacement is the affine function
 \[
  h(x)=\frac{a+r-x}{2r}\Tr u(a-r)
       +\frac{x-a+r}{2r}\Tr u(a+r).
 \]
 The two coefficients are nonnegative and sum to one, so this is the same
 probability-average representation with a measure supported on the two
 endpoint traces.  In either dimension, convexity shows that $h$ remains
 in the relevant target ball.  In the boundary case, the tangential
 component of this average remains in the convex base of the chosen graph
 chart.

In the interior case, the probability-average representation and
convexity of $Q_{\rm int}$ give
$h(x)\in Q_{\rm int}\subset M$ for every $x\in B_r(a)$.  Since
$h-u\in W^{1,2}_0(B_r(a);\R^m)$, $h$ is then an admissible competitor.
In the boundary-chart case, applying
\eqref{eq:barycentric-penetration} to $(\Tr u)_\#\lambda_x$ (or to the
corresponding two-point measure when $n=1$) gives
\[
 \dist(h(x),\overline M)\le C\varepsilon^2
 \qquad (x\in B_r(a)).
\]
Thus $\cR\circ h$ is an admissible competitor with the same trace as $u$.  By \eqref{eq:retraction-chain},
\begin{equation}\label{eq:retracted-energy}
 \int_{B_r(a)}|D(\cR\circ h)|^2
 \le (1+C\varepsilon^2)
      \int_{B_r(a)}|Dh|^2.
\end{equation}
Here and below the constant is enlarged to absorb the square, using
$(1+C\varepsilon^2)^2\le1+C'\varepsilon^2$ for
$\varepsilon\le\varepsilon_*$.  Minimality of $u$ and
\eqref{eq:retracted-energy} imply
\begin{equation}\label{eq:minimal-harmonic}
 \int_{B_r(a)}|Du|^2
 \le(1+C\varepsilon^2)
     \int_{B_r(a)}|Dh|^2.
\end{equation}
Since $u-h\in W^{1,2}_0(B_r(a))$ and $h$ is harmonic,
\[
 \int_{B_r(a)}Dh:D(u-h)=0,
\]
so
\begin{equation}\label{eq:pythagoras}
 \int_{B_r(a)}|Du|^2
 =\int_{B_r(a)}|Dh|^2
  +\int_{B_r(a)}|D(u-h)|^2.
\end{equation}
Combining \eqref{eq:minimal-harmonic} and \eqref{eq:pythagoras} proves \eqref{eq:harmonic-comparison}.  All traces are the ordinary Sobolev traces on a ball, so the argument is valid for every radius with $B_r(a)\Subset\Omega$.

\medskip
\noindent\emph{Step 3: the first Campanato iteration.}
Set
\[
 K_r:=\overline{\operatorname{ess\,ran}
                  (u|_{B_r(x_0)})}.
\]
For small $r$, the sets $K_r$ are nonempty nested compact subsets of one
bounded $K_{r_0}$, and
$\operatorname{diam}K_r\to0$ because $x_0\in\Reg_0(u)$.  For completeness,
let $r_j\downarrow0$ and choose $p_j\in K_{r_j}$.  If $k\ge j$, then
\[
 |p_k-p_j|\le\operatorname{diam}K_{r_j}\longrightarrow0.
\]
Hence $(p_j)$ is Cauchy and converges to a point $p_0$.  Closedness and
nesting give $p_0\in K_{r_j}$ for every $j$, and then
$p_0\in K_r$ for every $0<r<r_0$ by choosing $r_j<r$.  The vanishing
diameters give uniqueness.  This is the Cantor intersection theorem; see
\cite[Theorem~2.36]{Rudin1976}.  We have therefore proved
\begin{equation}\label{eq:range-collapse}
 p_0\in\bigcap_{0<r<r_0}K_r,
 \qquad
 \operatorname*{ess\,sup}_{B_r(x_0)}|u-p_0|
 \le\operatorname{diam}K_r\longrightarrow0
 \qquad\text{as }r\downarrow0.
\end{equation}
Choose $R_0>0$ and $0<\varepsilon_0\le\varepsilon_*$ so that
$B_{8R_0}(x_0)\Subset\Omega$, the essential range of $u$ on that ball has
diameter at most $\varepsilon_0$, and it lies in a ball compactly contained
in $M$ when $p_0\in M$, or in a convex ball
$Q\Subset\cT_{\delta_0}$ contained in one graph chart when
$p_0\in\Gamma$.  This choice is possible by \eqref{eq:range-collapse}.
Every $B_r(a)\subset B_{7R_0}(x_0)$ inherits the same range conditions;
hence \eqref{eq:harmonic-comparison} applies uniformly on all such balls.

For $B_r(a)\subset B_{7R_0}(x_0)$, set
\[
 E(a,r):=\int_{B_r(a)}|Du|^2.
\]
Let $h$ be the harmonic replacement on $B_r(a)$.  Since $|Dh|^2$ is subharmonic,
\begin{equation}\label{eq:harmonic-decay}
 \int_{B_{\theta r}(a)}|Dh|^2
 \le \theta^n\int_{B_r(a)}|Dh|^2
 \qquad (0<\theta<1).
\end{equation}
Using $Du=Dh+D(u-h)$, \eqref{eq:harmonic-comparison}, and \eqref{eq:harmonic-decay}, we obtain
\begin{equation}\label{eq:first-decay}
 E(a,\theta r)
 \le \bigl(2\theta^n+C\varepsilon_0^2\bigr)E(a,r).
\end{equation}
Choose $\theta\in(0,1/8)$ and then $\varepsilon_0>0$ so small that
\[
 2\theta^n+C\varepsilon_0^2\le\theta^{n-1}.
\]
Iteration of \eqref{eq:first-decay}, followed by interpolation between consecutive $\theta$-adic radii, gives
\begin{equation}\label{eq:first-morrey-growth}
 E(a,r)\le C_0 r^{n-1}.
\end{equation}
Here $a\in B_{5R_0}(x_0)$ and $0<r<R_0$, after reducing the range of
centers and radii.  The constant $C_0$ depends on $R_0$ and the energy on
$B_{7R_0}(x_0)$ but not on $a$ or $r$.

Poincar\'e's inequality and \eqref{eq:first-morrey-growth} give, uniformly for the balls under consideration,
\begin{equation}\label{eq:first-mean-oscillation}
 \fintavg_{B_r(a)}|u-u_{B_r(a)}|^2
 \le Cr^2\fintavg_{B_r(a)}|Du|^2
 \le Cr.
\end{equation}
By \eqref{eq:first-mean-oscillation},
\[
 \begin{aligned}
 |u_{B_r(a)}-u_{B_{r/2}(a)}|
 &\le \fintavg_{B_{r/2}(a)}|u-u_{B_r(a)}|\\
 &\le C\left(\fintavg_{B_r(a)}
                  |u-u_{B_r(a)}|^2\right)^{1/2}
 \le Cr^{1/2}.
 \end{aligned}
\]
Applying this estimate at the radii $2^{-j}r$ gives
\[
 \sum_{j=0}^{\infty}
 \bigl|u_{B_{2^{-j}r}(a)}-u_{B_{2^{-j-1}r}(a)}\bigr|
 \le Cr^{1/2}\sum_{j=0}^{\infty}2^{-j/2}<\infty.
\]
Thus the dyadic averages form a Cauchy sequence; denote their limit by
$u^*(a)$.  It agrees with $u$ at every Lebesgue point, and the same
telescoping estimate gives
\[
 |u^*(a)-u_{B_r(a)}|
 \le\sum_{j=0}^{\infty}
   |u_{B_{2^{-j}r}(a)}-u_{B_{2^{-j-1}r}(a)}|
 \le Cr^{1/2}.
\]
For $x,y\in B_{4R_0}(x_0)$ set $r:=2|x-y|$ and, when $r$ is small,
let $B^*:=B_{3r}(x)$.  Both $B_r(x)$ and $B_r(y)$ lie in $B^*$, so the
same mean-oscillation estimate gives
\[
 |u_{B_r(x)}-u_{B^*}|+|u_{B_r(y)}-u_{B^*}|
 \le Cr^{1/2}.
\]
Combining the last two displays yields
\begin{equation}\label{eq:half-holder}
 |u^*(x)-u^*(y)|\le L|x-y|^{1/2}
 \qquad (x,y\in B_{4R_0}(x_0)).
\end{equation}
After increasing $L$, the same estimate holds for all
$x,y\in B_{4R_0}(x_0)$, because $u$ is essentially bounded on
$B_{8R_0}(x_0)$.  Thus $u^*$ is a $C^{0,1/2}$ representative of $u$ on
$B_{4R_0}(x_0)$, and
\[
 B_{4R_0}(x_0)\subset\Reg_0(u).
\]
We henceforth write this representative again as $u$.  In particular,
$\Reg_0(u)$ is open.

\medskip
\noindent\emph{Step 4: the endpoint Campanato iteration.}
We now repeat the harmonic comparison using the improved oscillation.
If $L>0$, choose
\[
 0<r_*<\frac{R_0}{4}
 \quad\text{so that}\quad
 2Lr_*^{1/2}\le\varepsilon_*,
 \qquad
 Lr_*^{1/2}\le1.
\]
If $L=0$, the estimate \eqref{eq:half-holder} makes $u$ constant and the
desired conclusion is immediate.  Henceforth assume $L>0$.  For every
$B_r(a)\subset B_{4R_0}(x_0)$ with $0<r\le r_*$,
\[
 \osc_{B_r(a)}u\le 2Lr^{1/2},
\]
and hence \eqref{eq:harmonic-comparison} yields
\begin{equation}\label{eq:second-comparison}
 \int_{B_r(a)}|D(u-h)|^2
 \le C L^2 r E(a,r).
\end{equation}
Define the mean-energy quantity
\[
 \Phi(a,r):=r^{-n}E(a,r).
\]
For $0<\delta\le1$, the weighted inequality
\[
 |A+B|^2\le(1+\delta)|A|^2+(1+\delta^{-1})|B|^2
\]
together with \eqref{eq:harmonic-decay} and \eqref{eq:second-comparison} gives
\begin{align}
 \Phi(a,\theta r)
 &\le\Bigl[1+\delta
       +C\theta^{-n}(1+\delta^{-1})L^2r\Bigr]\Phi(a,r).
\label{eq:phi-pre}
\end{align}
Take $\delta=Lr^{1/2}$ in \eqref{eq:phi-pre}.  Then
\begin{equation}\label{eq:phi-iteration}
 \Phi(a,\theta r)
 \le\bigl(1+C_1Lr^{1/2}\bigr)\Phi(a,r).
\end{equation}
Along a geometric sequence $r_j=\theta^j r_*$, the errors are summable:
\[
 \sum_{j=0}^\infty Lr_j^{1/2}
 =\frac{Lr_*^{1/2}}{1-\theta^{1/2}}<\infty.
\]
Thus the product of the factors in \eqref{eq:phi-iteration} is uniformly bounded.  Starting at a fixed $r_*>0$ and interpolating once more between consecutive scales, we obtain
\begin{equation}\label{eq:mean-energy-bound}
 \sup_{a\in B_{2R_0}(x_0)}
 \sup_{0<r<r_*}
 \fintavg_{B_r(a)}|Du|^2<\infty.
\end{equation}
By the Lebesgue differentiation theorem, \eqref{eq:mean-energy-bound} implies
\[
 Du\in L^\infty(B_{2R_0}(x_0)).
\]
 Renaming the radius proves the proposition.  The argument also covers
 $n=1$.  In that case $B_r(a)=(a-r,a+r)$, and the one-dimensional
 Poincar\'e inequality gives
 \[
  \fintavg_{B_r(a)}|u-u_{B_r(a)}|^2
  \le Cr^2\fintavg_{B_r(a)}|u'|^2.
 \]
 Since \eqref{eq:first-morrey-growth} becomes
 $\int_{B_r(a)}|u'|^2\le C_0$, this yields
 \eqref{eq:first-mean-oscillation}.  The dyadic-average argument then
 gives \eqref{eq:half-holder}; substituting this estimate into
 \eqref{eq:harmonic-comparison} gives \eqref{eq:second-comparison}.
 Iterating \eqref{eq:phi-iteration} and using the summability in the
 display preceding \eqref{eq:mean-energy-bound} gives
 \eqref{eq:mean-energy-bound}.  The Lebesgue differentiation theorem
 therefore yields $u'\in L^\infty(B_{2R_0}(x_0))$.
\end{proof}

\section{The bounded reaction and the vector Euler equation}\label{sec:reaction}

Fix a ball $B\Subset\Omega$ on which \cref{prop:lipschitz} applies.  After
shrinking $B$, its image is either separated from $\Gamma$, in which case
$u$ is harmonic, or lies in $\cT_{\delta_0/4}$ and its projection is
contained in one graph chart.  We work in the latter case and write
\begin{equation}\label{eq:qtn}
 q:=\Pi(u),
 \qquad
 t:=\rho(u)\ge0,
 \qquad
 N:=N\circ q,
 \qquad
 u=q+tN.
\end{equation}
Here $q$ is the nearest boundary point and $t$ is the signed distance to
$\Gamma$; these two coordinates separate tangential motion from the
scalar obstacle constraint.  The maps $q,t,N$ are Lipschitz in the source.
Since $q$ takes values in $\Gamma$ and $|N|=1$,
\begin{equation}\label{eq:orthogonality}
 N\cdot Dq=0,
 \qquad
 N\cdot DN=0
 \quad\text{a.e.},
\end{equation}
and
\begin{equation}\label{eq:Du-decomp}
 Du=Dq+tDN+N\otimes Dt,
 \qquad
 |Du|^2=|Dq+tDN|^2+|Dt|^2.
\end{equation}

\begin{proposition}\label{prop:reaction}
Set
\[
 b:=Dq:DN,
 \qquad
 c:=|DN|^2.
\]
There exists $\eta\in L^\infty(B)$ such that
\begin{equation}\label{eq:eta}
 \eta=-\Delta t+b+ct
 \quad\text{in }\mathcal D'(B),
 \qquad
 0\le\eta\le\|b_+\|_{L^\infty(B)}
 \quad\text{a.e.},
\end{equation}
and
\begin{equation}\label{eq:eta-support}
 \eta=0\quad\text{a.e. on }\{t>0\},
 \qquad
 \supp\eta\subset\cC:=\{t=0\}.
\end{equation}
Moreover,
\begin{equation}\label{eq:vector-equation}
 -\Delta u=N\eta
 \quad\text{in }\mathcal D'(B;\R^m),
\end{equation}
and, for every $1<p<\infty$,
\begin{equation}\label{eq:W2p-u}
 t\in W^{2,p}_{\rm loc}(B),
 \qquad
 u\in W^{2,p}_{\rm loc}(B;\R^m).
\end{equation}
\end{proposition}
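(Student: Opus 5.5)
The plan is to derive everything from two families of admissible variations in the coordinates \eqref{eq:qtn}. First we vary the scalar $t$ alone while freezing $q$. Then we vary $u$ in arbitrary directions and retract with $\cR$.

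\emph{Scalar inequality.} For $\varphi\in C_c^\infty(B)$ with $\varphi\ge0$ and small $\epsilon>0$, set $u_\epsilon:=q+(t+\epsilon\varphi)N$. This map stays in $\overline M$ because its signed distance is $t+\epsilon\varphi\ge0$; by \eqref{eq:normal-ray-projection}, $q$ remains its projection. By \eqref{eq:Du-decomp},
\[
 |Du_\epsilon|^2=|Dq+(t+\epsilon\varphi)DN|^2+|D(t+\epsilon\varphi)|^2 .
\]
Differentiating at $\epsilon=0^+$ and using minimality gives
\[
 \int_B\bigl(Dt\cdot D\varphi+(b+ct)\varphi\bigr)\ge0 .
\]
So $\eta:=-\Delta t+b+ct$ is a nonnegative distribution, hence a Radon measure. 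On the open set $\{t>0\}$ (open because $t$ is continuous), $t\pm\epsilon\varphi\ge0$ for $\varphi$ supported there. This yields two-sided variations and the equality $\eta=0$ on $\{t>0\}$, so $\supp\eta\subset\cC$.

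\emph{Upper bound (Lewy--Stampacchia).} I would run the standard penalization or comparison argument for the obstacle problem $t\ge0$ with the right-hand side $f:=b+ct\in L^\infty$, the coefficients being Lipschitz. On $\cC$ we have $t=0$, so $Dt=0$ a.e. there. A Lewy--Stampacchia inequality then gives
\[
 0\le\eta\le f_+\chi_{\cC}=b_+\chi_{\cC}\le\|b_+\|_{L^\infty(B)} .
\]
To stay self-contained I would argue directly. Test the variational inequality with $\varphi=\min(t,\epsilon\psi)$-type truncations, or equivalently compare $\eta$ against $(f-\eta)$ on level sets $\{t<\epsilon\}$, and let $\epsilon\to0$. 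This identifies $\eta=f\chi_{\{t=0\}}$ in the weak sense, using that $\Delta t=0$ a.e. on $\{t=0\}$ once $t\in W^{2,p}$. To avoid circularity I would first show $\eta\le f_+$ via the penalized problems $-\Delta t_\epsilon+\beta_\epsilon(t_\epsilon)=f$, for which $\beta_\epsilon(t_\epsilon)\le f_+$ by the maximum principle. Once $\eta\in L^\infty$, Calder\'on--Zygmund applied to $-\Delta t=\eta-f$ gives $t\in W^{2,p}_{\rm loc}$.

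\emph{Vector equation.} For the vector Euler equation take $\psi\in C_c^\infty(B;\R^m)$ and $u_\epsilon:=\cR(u+\epsilon\psi)$, which is admissible for small $\epsilon$. Split $\psi=(\psi\cdot N)N+\psi^T$. Tangential variations, realized as $\Pi(u+\epsilon\psi^T)+tN(\cdot)$ in normal coordinates, are two-sided and produce no reaction. Normal variations reduce to the scalar computation above, with $\varphi=\psi\cdot N$ and both signs allowed where $t>0$. Combining these gives
\[
 \int_B Du:D\psi=\int_B\eta\,N\cdot\psi ,
\]
which is \eqref{eq:vector-equation}.

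The main obstacle is making the tangential computation rigorous with only Lipschitz $N$ and $\Pi$: $\Pi$ is not differentiable, so first-order expansions are unavailable. I would instead use \eqref{eq:retraction-chain} on the one-sided variation $\cR(u+\epsilon\psi)$, which gives the energy bound $\le(1+C\epsilon\|\psi\|)\dots$ only at second order near the contact set. I expect the cleanest route is to exploit the variation inequality for both $\psi$ and $-\psi$. The retraction loss $C_\Gamma\,\dist(u+\epsilon\psi,\overline M)\le C\epsilon(\psi\cdot N)_-$ contributes $O(\epsilon^2)$ off the relation; the first-order terms give $\int Du:D\psi\ge\int\eta\,(N\cdot\psi)$ up to sign symmetrization, with $\eta$ identified through the scalar step. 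Finally, $-\Delta u\in L^\infty$ gives $u\in W^{2,p}_{\rm loc}$.
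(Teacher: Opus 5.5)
Your scalar step and the penalization bound follow the paper's Steps 1--2. One detail differs. With $f=b+ct$ frozen, the maximum principle gives only $\eta_\varepsilon\le\|f_+\|_{L^\infty}$, not the pointwise bound $f_+$. That still yields $W^{2,p}_{\rm loc}$, and your identification $\eta=b\,\chi_{\cC}$ a.e. (from $D^2t=0$ a.e. on $\cC$) then recovers $\eta\le\|b_+\|_{L^\infty}$. The paper gets this constant directly by keeping $cs$, with $c\ge0$, in the penalized operator.

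The genuine gap is the vector equation. For a general $\psi$ the retraction loss in \eqref{eq:retraction-chain} is not $O(\varepsilon^2)$. On $\cC\cap\{\psi\cdot N<0\}$ one has $\dist(u+\varepsilon\psi,\overline M)=\varepsilon|\psi\cdot N|+O(\varepsilon^2)$. So the energy upper bound contains the first-order term $2C_\Gamma\varepsilon\int_{\cC}(\psi\cdot N)_-|Du|^2$, which in general does not vanish when $\cC$ has positive measure. Moreover, the crude bound $\operatorname{lip}\cR\le1+C_\Gamma\dist(\cdot,\overline M)$ carries no information about $\eta$. No term $\int\eta\,N\cdot\psi$ can emerge from ``the first-order terms'', so the inequality $\int Du:D\psi\ge\int\eta\,N\cdot\psi$ is asserted rather than derived. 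Indeed, if the loss were $O(\varepsilon^2)$ for every $\psi$, symmetrizing in $\pm\psi$ would force $\Delta u=0$. Your bound $\dist(u+\varepsilon\psi,\overline M)\le C\varepsilon(\psi\cdot N)_-$ also drops the curvature term $C\varepsilon^2|\psi|^2$, which is exactly what makes the tangential case work.

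The repair along your lines is to retract only the Lipschitz, compactly supported tangential field $w:=\psi-(\psi\cdot N)N$. Since $u+\varepsilon w=q+tN+\varepsilon w$ with $t\ge0$ and $w\perp N$, the $C^{1,1}$ Taylor estimate gives $\dist(u+\varepsilon w,\overline M)\le C\varepsilon^2|w|^2$ for both signs of $\varepsilon$. Then \eqref{eq:retraction-chain} and minimality give $\int_B Du:Dw=0$. The normal part needs no variation at all. By \eqref{eq:Du-decomp} and \eqref{eq:orthogonality}, for $\phi=\psi\cdot N$,
$\int_B Du:D(\phi N)=\int_B\bigl(Dt\cdot D\phi+(b+ct)\phi\bigr)=\int_B\eta\phi$, which is \eqref{eq:lambda-eta-pair}. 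Alternatively, the symmetrized crude estimate $|\int_B Du:D\psi|\le C\int_{\cC}|\psi\cdot N|\,|Du|^2$ already shows that $-\Delta u$ is an $L^\infty$ field parallel to $N$; its normal density is then $\eta$ by the same identity.

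The paper sidesteps $\Pi$ differently. In a graph chart $u=(v,h)$ it uses the exactly admissible two-sided variation $(v+s\varphi,\,h+g(v+s\varphi)-g(v))$. It integrates $\delta_s/s$ against the measure $\mu=-\Delta h\ge0$, so only the uniform convergence $\delta_s/s\to Dg(v)\cdot\varphi$ is needed, and then identifies $\sqrt{1+|Dg(v)|^2}\,\mu=\eta$. Your repaired route is shorter and chart-free, while the paper's avoids the retraction altogether.
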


\begin{proof}
\emph{Step 1: the scalar normal variational inequality.}
The functions $b,c$ belong to $L^\infty(B)$ and $c\ge0$.  For every $\sigma\in W^{1,2}(B)$ with $\sigma\ge0$ and $\sigma-t\in W^{1,2}_0(B)$, we claim that
\begin{equation}\label{eq:scalar-VI}
 \int_B Dt\cdot D(\sigma-t)
 +\int_B(b+ct)(\sigma-t)\ge0.
\end{equation}
It is enough first to consider bounded $\sigma$.  For $0<s<1$, put
\[
 t_s:=(1-s)t+s\sigma,
 \qquad
 u_s:=q+t_sN.
\]
Because $t_s\ge0$, because $t$ stays in the smaller tube, and because $\sigma$ is bounded, there exists $s_0=s_0(\|t\|_{L^\infty},\|\sigma\|_{L^\infty},\delta_0)>0$ such that $u_s$ remains in the allowed part of the tubular neighborhood whenever $0<s<s_0$.  Moreover, $u_s-u\in W^{1,2}_0(B;\R^m)$.  Minimality and \eqref{eq:Du-decomp} give
\[
 0\le \frac{1}{2s}\int_B\bigl(|Du_s|^2-|Du|^2\bigr).
\]
Since $t_s=t+s(\sigma-t)$, passage to the limit gives
\[
 \begin{aligned}
 0
 &\le\int_B Dt\cdot D(\sigma-t)
   +\int_B(Dq+tDN):DN\,(\sigma-t)\\
 &=\int_B Dt\cdot D(\sigma-t)
   +\int_B(b+ct)(\sigma-t),
 \end{aligned}
\]
which is \eqref{eq:scalar-VI}.  For general nonnegative $\sigma$, apply
the bounded case to $\sigma_k:=\min\{\sigma,k\}$ with
$k>\|t\|_{L^\infty(B)}$ and let $k\to\infty$ in $W^{1,2}(B)$.

\medskip
\noindent\emph{Step 2: the bounded Lewy--Stampacchia reaction.}
This is the local Lewy--Stampacchia estimate; compare
\cite{LewyStampacchia1969}.  We include the penalization argument because
the proof of \cref{prop:hessian} uses the exact $L^\infty$ bound for the
reaction $\eta$ obtained below.  On a ball
$B'\Subset B$, keep the trace of $t$ fixed and minimize
\[
 J_\varepsilon(s)
 :=\int_{B'}\left(\frac12|Ds|^2+bs+\frac12cs^2
                  +\frac{1}{2\varepsilon}(s_-)^2\right)\dd x,
 \qquad s-t\in W^{1,2}_0(B'),
\]
where $s_-:=\max\{-s,0\}$.  The unique minimizer $s_\varepsilon$ satisfies
\begin{equation}\label{eq:penalized}
 -\Delta s_\varepsilon+cs_\varepsilon+b
 =\eta_\varepsilon,
 \qquad
 \eta_\varepsilon:=\frac{(s_\varepsilon)_-}{\varepsilon}\ge0.
\end{equation}
Let $M_b:=\|b_+\|_{L^\infty(B')}$.  On the set where
$s_\varepsilon+\varepsilon M_b<0$, one has $s_\varepsilon<0$, and
\eqref{eq:penalized} gives
\[
 -\Delta(s_\varepsilon+\varepsilon M_b)
 +(c+\varepsilon^{-1})(s_\varepsilon+\varepsilon M_b)
 =-b+c\varepsilon M_b+M_b\ge0.
\]
 The trace of $s_\varepsilon+\varepsilon M_b$ is
$t+\varepsilon M_b\ge0$.  Hence
$(s_\varepsilon+\varepsilon M_b)_-\in W^{1,2}_0(B')$, and testing on
its support gives
 \[
  -\int_{B'}|D(s_\varepsilon+\varepsilon M_b)_-|^2
  -\int_{B'}(c+\varepsilon^{-1})
        (s_\varepsilon+\varepsilon M_b)_-^2
  =\int_{B'}(-b+c\varepsilon M_b+M_b)
        (s_\varepsilon+\varepsilon M_b)_-\ge0.
 \]
 The left-hand side is nonpositive; hence both sides vanish and
$s_\varepsilon+\varepsilon M_b\ge0$.  Consequently
\[
 0\le\eta_\varepsilon\le M_b.
\]
 We now prove the required convergence.  Set
 \[
  F(s):=\int_{B'}\left(\frac12|Ds|^2+bs+\frac12cs^2\right)\dd x
 \]
 on the affine space $t+W^{1,2}_0(B')$.  Set
 \[
  \mathcal K_{B'}
  :=\{s\in t+W^{1,2}_0(B'):s\ge0\ \text{a.e.}\}.
 \]
 If $s\in\mathcal K_{B'}$, extend $s-t\in W^{1,2}_0(B')$ by zero to
 $B$ and denote the extension by $\widetilde{s-t}$.  Then
 \[
  \widetilde s:=t+\widetilde{s-t}\in W^{1,2}(B),
  \qquad
  \widetilde s\ge0,
  \qquad
  \widetilde s-t\in W^{1,2}_0(B).
 \]
 Applying \eqref{eq:scalar-VI} to $\widetilde s$ gives the explicit
 restricted inequality
 \begin{equation}\label{eq:restricted-VI}
  \int_{B'}Dt\cdot D(s-t)
  +\int_{B'}(b+ct)(s-t)\ge0.
 \end{equation}
 Consequently, for every $s\in\mathcal K_{B'}$,
 \[
  \begin{aligned}
  F(s)-F(t)
  &=\int_{B'}Dt\cdot D(s-t)
    +\int_{B'}(b+ct)(s-t)\\
  &\quad+\frac12\int_{B'}
       \bigl(|D(s-t)|^2+c|s-t|^2\bigr)\ge0.
  \end{aligned}
 \]
 Thus $t$ is the unique minimizer of $F$ over $\mathcal K_{B'}$;
 uniqueness follows from the last display and Poincar\'e's inequality.
 Since $t\ge0$, minimality of
$s_\varepsilon$ gives
 \begin{equation}\label{eq:penalty-energy}
  F(s_\varepsilon)
  +\frac{1}{2\varepsilon}\|(s_\varepsilon)_-\|_{L^2(B')}^2
  \le F(t).
 \end{equation}
 Coercivity on $t+W^{1,2}_0(B')$, which follows from Poincar\'e's inequality and $c\ge0$, shows that $(s_\varepsilon)$ is bounded in $W^{1,2}(B')$.  The same inequality then gives
 \begin{equation}\label{eq:negative-part-bound}
  \|(s_\varepsilon)_-\|_{L^2(B')}^2\le C\varepsilon.
 \end{equation}
 After extraction, $s_\varepsilon\rightharpoonup s$ in $W^{1,2}(B')$ and
 $s_\varepsilon\to s$ in $L^2(B')$.  Since the map
 $r\mapsto r_-$ is $1$-Lipschitz, \eqref{eq:negative-part-bound} gives
 \[
  \|s_-\|_{L^2(B')}
  \le \|s_--(s_\varepsilon)_-\|_{L^2(B')}
      +\|(s_\varepsilon)_-\|_{L^2(B')}
  \le \|s-s_\varepsilon\|_{L^2(B')}+C\varepsilon^{1/2}
  \longrightarrow0.
 \]
 Thus $s\ge0$ almost everywhere.  Weak lower semicontinuity together
 with \eqref{eq:penalty-energy} gives
 \[
  F(s)\le\liminf_{\varepsilon\downarrow0}F(s_\varepsilon)
  \le F(t).
 \]
 By uniqueness of the constrained minimizer, $s=t$.  The same inequalities imply $F(s_\varepsilon)\to F(t)$.  Since $s_\varepsilon\to t$ in $L^2$, the terms containing $b$ and $c$ converge; hence
 \[
  \|Ds_\varepsilon\|_{L^2(B')}
  \longrightarrow\|Dt\|_{L^2(B')}.
 \]
 Weak convergence of the gradients and convergence of their norms yield
 \[
  s_\varepsilon\longrightarrow t
  \quad\text{strongly in }W^{1,2}(B').
 \]
 Thus the whole family converges, because every weakly convergent subsequence has the same limit.  The bound $0\le\eta_\varepsilon\le M_b$ permits a further weak-* extraction in $L^\infty(B')$.  Passing to the limit in \eqref{eq:penalized} defines
 \[
  \eta=-\Delta t+b+ct
 \]
 and proves \eqref{eq:eta} on $B'$.  The distribution on the right uniquely
determines $\eta$, so the local weak-* limits agree on overlaps.  Since
$B'\Subset B$ was arbitrary, an exhaustion yields \eqref{eq:eta} on $B$.

Let $U\Subset\{t>0\}$ and $\phi\in C_c^\infty(U)$.  Since $t$ is
continuous and positive on $\supp\phi$, both $t+s\phi$ and $t-s\phi$
are admissible in \eqref{eq:scalar-VI} for all sufficiently small $s>0$.
Substituting these two functions into \eqref{eq:scalar-VI} gives,
respectively,
\[
 s\langle-\Delta t+b+ct,\phi\rangle\ge0,
 \qquad
 -s\langle-\Delta t+b+ct,\phi\rangle\ge0.
\]
Since $s>0$, it follows that
\[
 \langle-\Delta t+b+ct,\phi\rangle=0.
\]
Hence $\eta=0$ almost everywhere on $\{t>0\}$ and
$\supp\eta\subset\{t=0\}$, proving \eqref{eq:eta-support}.

The equation
\[
 -\Delta t=\eta-b-ct\in L^\infty(B)
\]
gives $t\in W^{2,\infty}_{\rm loc}(B)$ when $n=1$.  If $n\ge2$, choose
$B''\Subset B'\Subset B$ and $\chi\in C_c^\infty(B')$ with
$\chi\equiv1$ on $B''$, and let
\[
 w:=\Gamma_n*\bigl(\chi(\eta-b-ct)\bigr).
\]
By \cite[Theorem~9.9]{GilbargTrudinger},
$w\in W^{2,p}_{\rm loc}(\R^n)$ for every $1<p<\infty$, while $t-w$ is
distributionally harmonic on $B''$.  Weyl's lemma gives
$t\in W^{2,p}(B'')$ for every finite $p$.

\medskip
\noindent\emph{Step 3: identification of the vector reaction.}
We identify the vector equation at the level of Radon measures, avoiding
any differentiation of $Dg(v)$ with respect to the variation parameter.
By the choice of $B$, after a rigid motion in the target its image lies
in a graph cylinder
\[
 u=(v,h),
 \qquad
 \overline M=\{(z,s):s\ge g(z)\},
 \qquad
 g\in C^{1,1}.
\]
Let $\phi\in C_c^\infty(B)$ be nonnegative and, for sufficiently small
$\varepsilon>0$, set
\[
 u_\varepsilon:=(v,h+\varepsilon\phi).
\]
Because $h+\varepsilon\phi\ge h\ge g(v)$ and the variation has compact
support, $u_\varepsilon$ is admissible.  Minimality gives
\[
 0\le \int_B\bigl(|Du_\varepsilon|^2-|Du|^2\bigr)
 =2\varepsilon\int_B Dh\cdot D\phi
  +\varepsilon^2\int_B|D\phi|^2.
\]
Dividing by $2\varepsilon$ and letting $\varepsilon\downarrow0$ yields
$\int_BDh\cdot D\phi\ge0$.  Hence the distribution
\[
 \mu:=-\Delta h
\]
is nonnegative.  To see directly that it is a Radon measure, fix a compact
set $K_0\Subset B$ and choose $\zeta\in C_c^\infty(B)$ with
$\zeta\ge0$ everywhere and $\zeta\ge1$ on $K_0$.  If
$\phi\in C_c^\infty(B)$ and $\supp\phi\subset K_0$, then
\[
 -\|\phi\|_{L^\infty}\zeta
 \le \phi\le
 \|\phi\|_{L^\infty}\zeta.
\]
Positivity therefore gives
\[
 |\langle\mu,\phi\rangle|
 \le \|\phi\|_{L^\infty}\langle\mu,\zeta\rangle
 \qquad\bigl(\supp\phi\subset K_0\bigr).
\]
Thus $\mu$ has order zero on compact subsets, and the Riesz representation
theorem identifies it with a nonnegative Radon measure.

Set $U:=\{h>g(v)\}$.  This set is open because $u$ and $g$ are
continuous.  For $\phi\in C_c^\infty(U)$, compactness of $\supp\phi$
gives
\[
 \delta:=\min_{\supp\phi}\bigl(h-g(v)\bigr)>0.
\]
If $0<\varepsilon<\delta/\|\phi\|_{L^\infty}$, with the assertion
being trivial when $\phi=0$, then both
$(v,h+\varepsilon\phi)$ and $(v,h-\varepsilon\phi)$ are admissible.
Applying minimality to these two variations gives
\[
 -\frac{\varepsilon}{2}\int_B|D\phi|^2
 \le \int_BDh\cdot D\phi
 \le \frac{\varepsilon}{2}\int_B|D\phi|^2.
\]
Letting $\varepsilon\downarrow0$ shows that
$\langle\mu,\phi\rangle=0$ for every $\phi\in C_c^\infty(U)$.  Therefore
\begin{equation}\label{eq:mu-support}
 \supp\mu\subset\{h=g(v)\}.
\end{equation}

Fix $\varphi\in C_c^\infty(B;\R^{m-1})$.  Since $v$ is continuous and
$\supp\varphi\Subset B$, the graph chart may be chosen so that
$v(\supp\varphi)$ is compactly contained in its base.
For $|s|$ small set
\begin{equation}\label{eq:tangent-variation}
 \delta_s:=g(v+s\varphi)-g(v),
 \qquad
 u_s:=(v+s\varphi,h+\delta_s).
\end{equation}
Indeed,
\[
 h+\delta_s-g(v+s\varphi)=h-g(v)\ge0,
\]
and $v+s\varphi$ remains in the graph base.  Since $Dg$ is Lipschitz
and $v,\varphi$ are Lipschitz,
\begin{equation}\label{eq:delta-estimates}
 \|\delta_s\|_{W^{1,\infty}(B)}\le C|s|,
 \qquad
 \frac{\delta_s}{s}\longrightarrow Dg(v)\cdot\varphi
 \quad\text{uniformly as }s\to0.
\end{equation}
Minimality gives $E(u_s)-E(u)\ge0$.  Hence the quotient by $2s$ is
nonnegative for $s>0$ and nonpositive for $s<0$.  Its explicit
expansion is
\begin{align*}
 \frac{E(u_s)-E(u)}{2s}
 &=\int_B Dv:D\varphi
   +\frac{s}{2}\int_B|D\varphi|^2
   +\frac1s\int_B Dh\cdot D\delta_s
   +\frac{1}{2s}\int_B|D\delta_s|^2.
\end{align*}
The last term tends to zero by \eqref{eq:delta-estimates}.  Since
$-\Delta h=\mu$ and $\delta_s$ is compactly supported and Lipschitz,
approximate $\delta_s$ uniformly and in $W^{1,2}$ by smooth
compactly supported functions.  This extends the distributional
identity to $\delta_s$ and gives
\[
 \frac1s\int_B Dh\cdot D\delta_s
 =\int_B\frac{\delta_s}{s}\dd\mu
 \longrightarrow\int_B Dg(v)\cdot\varphi\dd\mu.
\]
The right and left quotients converge to the same finite limit; their opposite variational inequalities force that limit to be zero.  Thus
\[
 \int_B Dv:D\varphi
 +\int_B Dg(v)\cdot\varphi\dd\mu=0.
\]
Therefore, as vector-valued Radon measures,
\begin{equation}\label{eq:graph-residual}
 -\Delta u=(-Dg(v),1)\mu.
\end{equation}
By \eqref{eq:mu-support}, the graph unit normal agrees $\mu$-a.e. with the tubular normal $N$.  Hence
\begin{equation}\label{eq:normal-measure}
 -\Delta u=N\sqrt{1+|Dg(v)|^2}\,\mu,
\end{equation}
where the nonnegative measure on the right is supported on
$\cC=\{t=0\}$.

It remains to identify this measure with $\eta$.  For
$\phi\in C_c^\infty(B)$, approximate the Lipschitz field $N\phi$ by
smooth compactly supported fields, uniformly and in $W^{1,2}$.  This
permits its use in both the measure and distributional identities.  By
\eqref{eq:Du-decomp}--\eqref{eq:orthogonality},
\begin{align}
 \langle-\Delta u,N\phi\rangle
 &=\int_B Du:D(N\phi)\dd x\notag\\
 &=\int_B\bigl(Dq:DN+t|DN|^2\bigr)\phi\dd x
   +\int_B Dt\cdot D\phi\dd x\notag\\
 &=\langle-\Delta t+b+ct,\phi\rangle
 =\int_B\eta\phi\dd x.
\label{eq:lambda-eta-pair}
\end{align}
On the other hand, \eqref{eq:normal-measure} gives
\[
 \langle-\Delta u,N\phi\rangle
 =\int_B\phi\sqrt{1+|Dg(v)|^2}\dd\mu.
\]
Thus
\[
 \sqrt{1+|Dg(v)|^2}\,\mu=\eta\,\mathcal L^n,
\]
which proves \eqref{eq:vector-equation}.  Since $N\eta\in L^\infty$,
the one-dimensional equation gives \eqref{eq:W2p-u} when $n=1$.  If
$n\ge2$, fix $B''\Subset B'\Subset B$, choose
$\chi\in C_c^\infty(B')$ with $\chi\equiv1$ on $B''$, and set
\[
 V:=\Gamma_n*(\chi N\eta).
\]
By \cite[Theorem~9.9]{GilbargTrudinger}, $V\in W^{2,p}_{\rm loc}(\R^n)$
for every $1<p<\infty$.  Moreover,
\[
 -\Delta(u-V)=N\eta-\chi N\eta=0
 \quad\text{in }\mathcal D'(B'';\R^m).
\]
Weyl's lemma therefore gives $u-V\in C^\infty(B'';\R^m)$, and hence
$u\in W^{2,p}(B'';\R^m)$ for every finite $p$.  Since $B''\Subset B$
was arbitrary, this proves \eqref{eq:W2p-u}.
\end{proof}

\section{The endpoint Hessian estimate}\label{sec:hessian}

Assume $n\ge2$; if $n=1$, \eqref{eq:vector-equation} immediately gives
$u\in W^{2,\infty}_{\rm loc}$.  Work in nested balls
\[
 B_{4R}\Subset B
\]
small enough that the target tubular coordinates above are valid, that
$q(B_{4R})$ is contained in one quadratic-support neighborhood from
\cref{prop:geometry}, and that
\begin{equation}\label{eq:normal-close-scale}
 |N(z)-N(y)|\le\frac14
 \quad\text{whenever }z,y\in B_{4R}.
\end{equation}
For $y\in B_{4R}$ set
\[
 N_y:=N(y),
 \qquad
 P_y:=I-N_y\otimes N_y.
\]

\begin{samepage}
\begin{proposition}\label{prop:hessian}
Assume $n\ge2$, let $u,q,t,N,$ and $\eta$ be as in
\cref{prop:reaction}, and let $B_{4R}\Subset B$ be chosen so that the
tubular coordinates are valid on $u(B_{4R})$, the set $q(B_{4R})$ is
contained in one quadratic-support neighborhood from
\cref{prop:geometry}, and \eqref{eq:normal-close-scale} holds.  Then there
is a constant $C<\infty$,
depending only on $n$, $R$, the distance from $B_{4R}$ to $\partial B$,
the local bounds for $u$, $Du$, $N$, $DN$, and $\eta$, and the
$C^{1,1}$ geometric constants of $\Gamma$, such that
\[
 \|D^2u\|_{L^\infty(B_R)}\le C.
\]
\end{proposition}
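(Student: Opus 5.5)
The plan is to prove the bound in two regimes, contact points and non-contact points, by splitting $u$ against the frozen normal $N_y$. For $y\in B_{2R}$ write
\[
 u=N_y\,(N_y\cdot u)+P_yu .
\]
By \eqref{eq:vector-equation} the two pieces satisfy
\[
 -\Delta(N_y\cdot u)=(N_y\cdot N)\,\eta,
 \qquad
 -\Delta(P_yu)=P_y\bigl(N-N_y\bigr)\eta .
\]
By \eqref{eq:normal-close-scale}, $N_y\cdot N\ge 3/4$, so the normal piece is a scalar superharmonic function with bounded Laplacian comparable to $\eta$. The tangential piece has a right-hand side that vanishes at $y$ to first order. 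Indeed, $N$ is Lipschitz in the source by \cref{prop:reaction}, so $|P_y(N(x)-N_y)\eta(x)|\le C|x-y|$.

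\emph{Step 1: the rotating-normal cancellation.} I would localize with a cutoff $\chi$ supported in $B_{3R}$ and write $P_yu=\Gamma_n*(\chi P_y(N-N_y)\eta)+H_y$, where $H_y$ is harmonic on $B_{2R}$ with bounds uniform in $y$. For a right-hand side $f_y$ with $|f_y(x)|\le C|x-y|$, the second derivatives of $\Gamma_n*f_y$ at $y$ are given by the principal-value integral of $D^2\Gamma_n(y-x)f_y(x)$. This integral converges absolutely, since $|x-y|^{-n}\cdot|x-y|$ is integrable. The delta term vanishes because $f_y(y)=0$. Since $u\in W^{2,p}_{\rm loc}$, this identity holds at every Lebesgue point of $D^2u$. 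I would make it rigorous by checking that, for a.e.\ $y$, the Calder\'on--Zygmund representation of $D^2(\Gamma_n*f_y)$ evaluated at $y$ coincides with the absolutely convergent integral. The outcome is $|D^2(P_yu)(y)|\le C$ uniformly for a.e.\ $y$. The same bound holds for $D^2(P_yu)(x)$ with $|x-y|\le\dist(x,\cC)$ after an additional $C|x-y|$ error.

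\emph{Step 2: quadratic growth of the normal component at contact points.} Fix $y\in\cC\cap B_{2R}$, so $u(y)=q(y)$. Let $w_y:=N_y\cdot(u-u(y))$ and let $\ell_y$ be its linear part at $y$. Since $t\ge0$ and $N_y\cdot N\ge0$, the quadratic support inequality \eqref{eq:quadratic-support} and the Lipschitz bound on $q$ give
\[
 w_y(x)\ge N_y\cdot(q(x)-q(y))\ge -C|q(x)-q(y)|^2\ge -C|x-y|^2 .
\]
Since $w_y$ is $C^1$ and attains this one-sided quadratic minimum structure at $y$ (it has value $0$ there), $Dw_y(y)$ is forced to vanish. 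Hence $\ell_y=0$ and $w_y+C|x-y|^2\ge0$ with value $0$ at $y$. Its Laplacian is bounded by $\|\eta\|_\infty+C$. Next I would apply the mean value inequality for functions with bounded Laplacian to the nonnegative function $w_y+C|x-y|^2$, giving $\fintavg_{B_r(y)}(w_y+C|x-y|^2)\le Cr^2$. The weak Harnack inequality for nonnegative functions with bounded Laplacian (or a Newton-potential splitting plus Harnack on the harmonic part) then upgrades this to $\sup_{B_r(y)}|w_y|\le Cr^2$. Combining this with Step 1, which controls $P_yu$ minus its first-order Taylor polynomial by $Cr^2$, I expect
\[
 \sup_{B_r(y)}\bigl|u-u(y)-Du(y)(x-y)\bigr|\le Cr^2
 \qquad(y\in\cC\cap B_{2R}).
\]

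\emph{Step 3: conclusion.} At a.e.\ point of $\cC$, $D^2u$ is controlled by Steps 1 and 2: Step 1 handles the tangential part directly, and the quadratic growth in Step 2 bounds $D^2w_y(y)$ at Lebesgue points. Alternatively, $D^2t=0$ a.e.\ on $\{t=0\}$ because $t\in W^{2,p}$, and then $D^2u=D^2q$ there, which is controlled through Step 1. At $x\in B_R\setminus\cC$, set $d:=\dist(x,\cC)$. If $d\ge R$, then $u$ is harmonic on a fixed ball by \eqref{eq:eta-support} and interior estimates give the bound. Otherwise, pick $y\in\cC$ with $|x-y|=d$. On $B_d(x)$ the function $u$ is harmonic, and $u$ minus the affine function $u(y)+Du(y)(\cdot-y)$ is bounded by $Cd^2$ on $B_d(x)$ by Step 2. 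Interior derivative estimates then give $|D^2u(x)|\le Cd^{-2}\cdot Cd^2=C$. The constants depend only on the quantities listed in \cref{prop:hessian}.

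I expect the main obstacle to be Step 2. A bounded reaction alone does not give $C^{1,1}$, so the argument must combine the one-sided quadratic support with superharmonicity to obtain a two-sided quadratic bound. The delicate point is passing from the averaged estimate to a pointwise one without knowing that $\Delta w_y$ is Dini-continuous. My first attempt would be the Harnack inequality applied to the nonnegative function $w_y+C|x-y|^2$, minus the Newton potential of its bounded Laplacian. That Newton potential has only $C^{1,\beta}$ size control, and I would absorb it at scale $r$ using its $Cr^2\log(1/r)$-free $L^\infty$ bound relative to its own affine part. If this turns out to fail, I would fall back on Step 1's cancellation, applied to the normal component through the identity $N_y\cdot N=1-\tfrac12|N-N_y|^2$.
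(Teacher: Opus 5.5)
Your argument is correct, but it is organized differently from the paper's.

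\textbf{What is shared.} Both proofs freeze the normal $N_y$. Both treat the tangential part through the rotating-normal cancellation $|P_y(N(z)-N_y)\eta(z)|\le C|z-y|$. Both treat the normal part through superharmonicity of $N_y\cdot(u-u(y))$ together with the one-sided bound from \eqref{eq:quadratic-support}.

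\textbf{How the paper proceeds.} The paper never proves two-sided quadratic growth.
\begin{itemize}
\item On $\cC$ it handles the normal component algebraically. It uses $D^2t=0$ a.e. on $\{t=0\}$ and differentiates $D_it=N\cdot D_iu$, which gives $|N\cdot D^2u|\le|DN|\,|Du|$ there.
\item At a free point $x$ it bounds the two components of $D^2u(x)$ separately. The tangential part comes from the single-point Newton-potential lemma, recentered at $x$, using $|z-x|\ge d$ on $\supp\eta$.
\item For the free-point normal part it needs only the $L^1$ bound $\int_{B_{2d}(y)}|H|\le Cd^{n+2}$, which uses only the sign of $\eta$, followed by harmonic estimates on $B_{d/2}(x)$.
\end{itemize}

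\textbf{How you proceed, and the trade-off.} You prove $|u-u(y)-Du(y)(\cdot-y)|\le Cr^2$ at every contact point, and then close with a single interior harmonic estimate at scale $d$. This is the classical obstacle-problem route. It gives a stronger, reusable statement, the usual input for blow-up analysis, and it avoids recentering the cancellation at free points. The price is two estimates the paper does not need.

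\textbf{The two estimates you must supply.}

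First, Step 1 as you state it bounds the Hessian only at the single point $y$. That does not control the Taylor remainder of $P_yu$ on $B_r(y)$ uniformly in $r$. You need the kernel estimate directly. If $|G(z)|\le L|z-y|$ and $G$ is supported in $B_\varrho(y)$, split $\int[\Gamma_n(x-z)-\Gamma_n(y-z)-D\Gamma_n(y-z)(x-y)]G(z)\dd z$ at $|z-y|=2|x-y|$. This gives the bound $CL(\varrho+|x-y|)|x-y|^2$.

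Second, the passage from averages to suprema is elementary. It needs neither weak Harnack, which bounds infima of supersolutions and so points the wrong way, nor any Dini condition.
\begin{itemize}
\item Since $\Delta w_y\ge-\|\eta\|_\infty$, the function $w_y+\frac{\|\eta\|_\infty}{2n}|\cdot-x_1|^2$ is subharmonic.
\item For $x_1\in B_r(y)$ this gives $w_y(x_1)\le 2^n\fintavg_{B_{2r}(y)}(w_y)_++Cr^2$.
\item Superharmonicity and $w_y(y)=0$ bound the average $\fintavg_{B_{2r}(y)}(w_y)_+\le Cr^2$. Hence $w_y(x_1)\le Cr^2$.
\item The lower bound $w_y\ge-C|\cdot-y|^2$ supplies the other side.
\end{itemize}

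\textbf{Two smaller remarks.} In Step 1, write $\Gamma_n*(\chi P_yf)=P_y\,(\Gamma_n*(\chi f))$. Then the exceptional null set for the Calder\'on--Zygmund formula does not depend on $y$, which is what makes evaluation at $x=y$ legitimate. Also drop the alternative ``$D^2u=D^2q$ on $\cC$''. Since $q=\Pi\circ u$ is only Lipschitz for a $C^{1,1}$ target, $D^2q$ need not exist; differentiating $D_it=N\cdot D_iu$ is the correct substitute.
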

\end{samepage}

\begin{proof}
\emph{Step 1: pointwise Newton-potential cancellation.}
Let $x_0\in\R^n$, $n\ge2$, and let $G\in L^\infty_c(\R^n;\R^k)$ satisfy
\[
 |G(z)|\le L|z-x_0|
 \quad\text{for a.e. }z,
 \qquad
 \supp G\subset B_\varrho(x_0).
\]
If $\Gamma_n$ is the fundamental solution of $-\Delta$ and
\[
 V(x):=\int_{\R^n}\Gamma_n(x-z)G(z)\dd z,
\]
then $V$ is twice differentiable at $x_0$ and
\begin{equation}\label{eq:newton-cancel}
 |D^2V(x_0)|\le C_nL\varrho.
\end{equation}
The argument applies componentwise.  The candidate Hessian is absolutely
convergent because
\[
 \int_{B_\varrho(x_0)}
 |D^2\Gamma_n(x_0-z)|\,|G(z)|\dd z
 \le C_nL\int_0^\varrho r^{-n}r\,r^{n-1}\dd r
 =C_nL\varrho.
\]
For a unit vector $e$ and $\tau\ne0$, set
\[
 Q_\tau(e)
 :=\frac{DV(x_0+\tau e)-DV(x_0)}{\tau}
   -\int_{\R^n}D^2\Gamma_n(x_0-z)e\,G(z)\dd z.
\]
Split $Q_\tau(e)$ into the regions
$B_\tau:=\{|z-x_0|\le2|\tau|\}$ and $B_\tau^c$.  The kernel estimates
$|D\Gamma_n(w)|\le C_n|w|^{1-n}$ and
$|D^2\Gamma_n(w)|\le C_n|w|^{-n}$ give
\[
 \begin{aligned}
 |Q_\tau^{\rm near}(e)|
 &\le \frac{C_n}{|\tau|}
   \int_{B_\tau}
   \bigl(|x_0+\tau e-z|^{1-n}+|x_0-z|^{1-n}\bigr)|G(z)|\dd z\\
 &\quad+C_n\int_{B_\tau}|x_0-z|^{-n}|G(z)|\dd z
 \le C_nL|\tau|.
 \end{aligned}
\]
On $B_\tau^c$, the mean-value theorem yields
\[
 \left|
 \frac{D\Gamma_n(x_0+\tau e-z)-D\Gamma_n(x_0-z)}{\tau}
 \right|
 \le C_n|z-x_0|^{-n}.
\]
After multiplication by $G$, the majorant is
$C_nL|z-x_0|^{1-n}\mathbf1_{B_\varrho(x_0)}$, which is integrable.
The convergence is uniform in $e$: for $0<2|\tau|<\delta$, the integral
over $2|\tau|<|z-x_0|<\delta$ is at most $C_nL\delta$, while on
$|z-x_0|\ge\delta$ the kernel derivatives converge uniformly in $e$.
Letting first $\tau\to0$ and then $\delta\to0$ gives
\[
 \sup_{|e|=1}|Q_\tau(e)|\longrightarrow0.
\]
Hence $V$ is twice Fr\'echet differentiable at $x_0$, with
\[
 D^2V(x_0)=\int_{\R^n}D^2\Gamma_n(x_0-z)G(z)\dd z,
\]
and \eqref{eq:newton-cancel} follows.

\medskip
\noindent\emph{Step 2: the Hessian on the contact set.}
We first control the tangential component.  For almost every $y\in\cC\cap B_{2R}$,
\begin{equation}\label{eq:contact-tangent}
 |P_yD^2u(y)|\le C,
\end{equation}
where $C$ depends only on $n$, $R$, local bounds for $u$, $Du$, $N$, $DN$, and $\eta$.
Let $f:=N\eta$, so that $-\Delta u=f$.  Choose $\chi\in C_c^\infty(B_{4R})$ with $\chi\equiv1$ on $B_{3R}$.  Let $\Gamma_n$ be the fundamental solution of $-\Delta$ and define
\[
 V_y(x):=\int_{B_{4R}}\Gamma_n(x-z)\chi(z)P_yf(z)\dd z.
\]
Because $\eta$ is supported on $\cC$ and $P_yN_y=0$, the source has the pointwise cancellation
\begin{equation}\label{eq:contact-source-cancel}
 |P_yf(z)|
 =|P_y(N(z)-N_y)\eta(z)|
 \le \|DN\|_\infty\|\eta\|_\infty|z-y|
\end{equation}
for almost every $z$.  Thus the cancellation estimate \eqref{eq:newton-cancel}, applied to $G=\chi P_yf$, gives
\[
 |D^2V_y(y)|\le C.
\]
Local integrability of $\Gamma_n$, together with the uniform bound and
fixed support of $\chi P_yf$, also gives
\[
 \sup_{y\in B_{2R}}\|V_y\|_{L^\infty(B_{3R})}\le C.
\]
In $B_{3R}$ the function $P_yu-V_y$ is harmonic.  The interior harmonic estimate \cite[Theorem~2.10]{GilbargTrudinger}, together with the local $L^\infty$ bounds for $u$ and $V_y$, gives a uniform bound for its Hessian on $B_{2R}$.  Since $u\in W^{2,p}_{\rm loc}$, $D^2u(y)$ has a Lebesgue value for almost every $y$, and \eqref{eq:contact-tangent} follows.

We next control the normal component.  For almost every $y\in\cC\cap B_{2R}$,
\begin{equation}\label{eq:contact-normal}
 |N_y\cdot D^2u(y)|\le C.
\end{equation}
By \cref{prop:reaction}, $t\in W^{2,p}_{\rm loc}$ for every finite $p$.
For $w\in W^{1,p}_{\rm loc}$, the Sobolev truncation formulas
\cite[Corollary~2.1.8]{Ziemer} are
\[
 D\max\{w,0\}=\mathbf1_{\{w>0\}}Dw,
 \qquad
 D\min\{w,0\}=\mathbf1_{\{w<0\}}Dw.
\]
Since $w=\max\{w,0\}+\min\{w,0\}$, they imply
$Dw=0$ almost everywhere on $\{w=0\}$.  Applying this first to $w=t$
gives $Dt=0$ almost everywhere on $\cC=\{t=0\}$.  Each
$D_it\in W^{1,p}_{\rm loc}$ therefore vanishes almost everywhere on
$\cC$; applying the same formula to $w=D_it$ gives
$D(D_it)=0$ almost everywhere on $\cC$.  Thus
\begin{equation}\label{eq:D2t-zero}
 D^2t=0\quad\text{a.e. on }\cC.
\end{equation}
Taking the scalar product of \eqref{eq:Du-decomp} with $N$ and using
$N\cdot Dq=N\cdot DN=0$ and $|N|=1$, we obtain
\begin{equation}\label{eq:Dt-identity}
 D_it=N\cdot D_i u
\end{equation}
for each $i=1,\ldots,n$.  Since $N\in W^{1,\infty}$ and
$u\in W^{2,p}$, weak differentiation of \eqref{eq:Dt-identity} is
legitimate and yields
\begin{equation}\label{eq:D2t-source}
 D_{ji}t=D_jN\cdot D_i u+N\cdot D_{ji}u.
\end{equation}
Combining \eqref{eq:D2t-zero} and \eqref{eq:D2t-source},
\[
 |N\cdot D^2u|
 \le |DN|\,|Du|
 \quad\text{a.e. on }\cC,
\]
which proves \eqref{eq:contact-normal}.

\medskip
\noindent\emph{Step 3: the Hessian in the free phase.}
We prove that
\begin{equation}\label{eq:free-hessian}
 |D^2u(x)|\le C
\end{equation}
for every free point $x\in B_R\cap\{t>0\}$.  Indeed, the free set is
open and $u$ is smooth there because it is harmonic by
\eqref{eq:eta-support} and \eqref{eq:vector-equation}.  If the contact
set is empty in $B_{2R}$, the conclusion follows from a fixed-scale
harmonic estimate.  More generally, the same estimate applies whenever
\[
 d:=\dist(x,\cC)\ge d_0
\]
for a fixed sufficiently small $d_0<R$.  We therefore assume
$0<d<d_0$.  Since $\cC=\{t=0\}$ is relatively closed in $B$ and every
minimizing sequence for the distance from $x$ lies in
$\overline{B_{2R}}$ when $d_0\ll R$, compactness gives
$y\in\cC\cap\overline{B_{2R}}$ with $|x-y|=d$.  After decreasing
$d_0$ once more, $y\in B_{2R}$ and
$B_{2d}(y)\Subset B_{3R}$.

We first control the component tangential to $\Gamma$ at $u(y)$.  On the support of $\eta$ one has $|z-x|\ge d$, and hence
\begin{align}
 |P_yf(z)|
 &\le C|z-y|
 \le C(|z-x|+d)
 \le 2C|z-x|.
\label{eq:free-source-cancel}
\end{align}
Using the same cutoff Newton potential as in the tangential contact-set argument, now evaluated at $x$, the singular Hessian integral is bounded by
\[
 C\int_d^{4R}r^{-n}\,r\,r^{n-1}\dd r\le C.
\]
The harmonic remainder is controlled at a fixed scale.  Therefore
\begin{equation}\label{eq:free-tangent}
 |P_yD^2u(x)|\le C.
\end{equation}

For the normal component define
\[
 H(z):=N_y\cdot(u(z)-u(y)).
\]
By \eqref{eq:vector-equation}, \eqref{eq:normal-close-scale}, and $\eta\ge0$,
\begin{equation}\label{eq:H-superharmonic}
 -\Delta H=N_y\cdot N(z)\eta(z)\ge0
 \quad\text{in }B_{2d}(y),
\end{equation}
and $H$ is harmonic in $B_d(x)$.  The one-sided quadratic support \eqref{eq:quadratic-support} gives a lower bound that uses only $C^{1,1}$ geometry.  Indeed, with $q_y=q(y)=u(y)$,
\begin{align*}
 H(z)
 &=N_y\cdot(q(z)-q_y)+t(z)N_y\cdot N(z)\\
 &\ge -C|q(z)-q_y|^2
 \ge -C|u(z)-u(y)|^2
 \ge -C|z-y|^2,
\end{align*}
where the last inequality uses the local Lipschitz bound for $u$.
The inequality \eqref{eq:H-superharmonic} holds distributionally; we
use the continuous superharmonic representative of $H$, or equivalently
obtain the following supermean inequality by mollification.  Since
$H(y)=0$, it implies
\begin{equation}\label{eq:H-L1}
 \int_{B_{2d}(y)}H_+
 \le\int_{B_{2d}(y)}H_-
 \le Cd^{n+2}.
\end{equation}
For every $z\in B_{d/2}(x)$,
\[
 |z-y|\le |z-x|+|x-y|<\frac32d<2d,
 \qquad
 \dist(z,\cC)\ge d-|z-x|>\frac d2.
\]
Thus
\[
 B_{d/2}(x)\subset B_{2d}(y)\cap\{t>0\}.
\]
By \eqref{eq:eta-support} and \eqref{eq:vector-equation}, $u$, and hence
$H$, is harmonic on this ball.  Consequently $|H|$ is subharmonic there,
and the mean-value inequality gives
\[
 \sup_{B_{d/4}(x)}|H|
 \le Cd^{-n}\int_{B_{d/2}(x)}|H|.
\]
Applying the harmonic derivative estimate
\cite[Theorem~2.10]{GilbargTrudinger} on $B_{d/4}(x)$ and using
\eqref{eq:H-L1} therefore yields
\begin{equation}\label{eq:H-hessian}
 |D^2H(x)|
 \le Cd^{-n-2}\int_{B_{d/2}(x)}|H|
 \le C.
\end{equation}
Since $D^2H(x)=N_y\cdot D^2u(x)$, combining \eqref{eq:free-tangent} and \eqref{eq:H-hessian} proves \eqref{eq:free-hessian}.  The contact-set bounds \eqref{eq:contact-tangent}--\eqref{eq:contact-normal} and the free-phase bound \eqref{eq:free-hessian} together prove the proposition.
\end{proof}

\begin{proof}[Proof of \cref{thm:main}]
Fix $x_0\in\Reg_0(u)$.  By \cref{prop:lipschitz}, $u$ is locally Lipschitz near $x_0$.  If its image is separated from $\Gamma$, it is harmonic and the conclusion is classical.  Otherwise, shrink the source ball so that the tubular-coordinate analysis applies.  Proposition~\ref{prop:reaction} gives $u\in W^{2,p}_{\rm loc}$ for every finite $p$ and identifies its bounded normal reaction.  If $n=1$, the equation $-u''=N\eta\in L^\infty$ already yields the desired conclusion.  If $n\ge2$, Proposition~\ref{prop:hessian} gives
\[
 D^2u\in L^\infty_{\rm loc}
\]
near $x_0$.  Thus $u\in W^{2,\infty}_{\rm loc}$ and has a $C^{1,1}$ representative there.
If a representative $\widetilde u$ is continuous at $x_0\in\Omega$, then
\[
 \osc_{B_r(x_0)}u
 \le2\operatorname*{ess\,sup}_{B_r(x_0)}
       |\widetilde u-\widetilde u(x_0)|
 \longrightarrow0.
\]
Hence $x_0\in\Reg_0(u)$.  Applying the result just proved at $x_0$ yields
an $r>0$ such that $u\in W^{2,\infty}(B_r(x_0);\R^m)$, which is the final
assertion of \cref{thm:main}.
\end{proof}

\section{Sharpness of the \texorpdfstring{$C^{1,1}$}{C1,1} threshold}
\label{sec:sharpness}

The counterexample is based on the following projection-and-slicing
principle.

\begin{proposition}
\label{prop:shortest-arc}
Let $K\subset\R^2$ be a compact convex body, and let distinct points
$A,B\in\partial K$ be given.  Define the intrinsic arclength distance by
\[
 d_{\partial K}(A,B)
 :=\inf\left\{\operatorname{Length}(\sigma):
 \begin{array}{l}
  \sigma\in\mathrm{AC}([0,1];\partial K),\\
  \sigma(0)=A,\ \sigma(1)=B
 \end{array}\right\}.
\]
Suppose that $\gamma:[-T,T]\to\partial K$, $T>0$, is a constant-speed
parametrization satisfying
\[
 \gamma(-T)=A,
 \qquad
 \gamma(T)=B,
 \qquad
 \operatorname{Length}(\gamma)=d_{\partial K}(A,B)=:\ell.
\]
If $n=1$, set $\Omega:=(-T,T)$ and $u(t):=\gamma(t)$.  If $n\ge2$,
let $D\subset\R^{n-1}$ be a bounded Lipschitz domain and set
\[
 \Omega:=(-T,T)\times D,
 \qquad
 u(t,z):=\gamma(t).
\]
Then
\[
 u\in W^{1,\infty}
 \bigl(\Omega;\R^2\setminus\operatorname{int}K\bigr),
\]
and every
\[
 v\in W^{1,2}
 \bigl(\Omega;\R^2\setminus\operatorname{int}K\bigr),
 \qquad
 v-u\in W^{1,2}_0(\Omega;\R^2),
\]
satisfies
\[
 \int_\Omega|Dv|^2\dd x\ge\int_\Omega|Du|^2\dd x.
\]
More precisely,
\[
 \int_\Omega|Du|^2\dd x
 =\begin{cases}
   \displaystyle\frac{\ell^2}{2T},&n=1,\\[6pt]
   \displaystyle |D|\frac{\ell^2}{2T},&n\ge2.
  \end{cases}
\]
\end{proposition}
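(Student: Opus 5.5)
The plan is to compose competitors with the nearest-point projection onto the convex body and then slice along the $t$-direction, so that each slice becomes a curve joining $A$ to $B$ in the plane minus the interior of $K$.

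\emph{Regularity of $u$.} Since $\gamma$ has constant speed $\ell/(2T)$, it is Lipschitz, and $u$ takes values in $\partial K\subset\R^2\setminus\operatorname{int}K$. Because $Du=\gamma'(t)\otimes e_1$ with $|\gamma'|=\ell/(2T)$, we get $|Du|^2=\ell^2/(4T^2)$. Integrating over $(-T,T)\times D$ gives the stated energy $|D|\ell^2/(2T)$, and for $n=1$ it gives $\ell^2/(2T)$.

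\emph{Lower bound via slicing.} Take a competitor $v$. By Fubini and the ACL characterization \cite[Theorem~2.1.4]{Ziemer}, for a.e.\ $z\in D$ the slice $t\mapsto v(t,z)$ is absolutely continuous on $[-T,T]$. The condition $v-u\in W^{1,2}_0(\Omega)$ should give the endpoint values $v(\pm T,z)=\gamma(\pm T)$ for a.e.\ $z$; I would justify this by approximating $v-u$ with $C_c^\infty$ functions and noting that a subsequence converges on a.e.\ line in $W^{1,2}(-T,T)$, hence uniformly. The inequality $\int_\Omega|Dv|^2\ge\int_D\int_{-T}^T|\partial_tv|^2\dd t\dd z$ together with Cauchy--Schwarz, $\int_{-T}^T|\partial_t v|^2\ge \operatorname{Length}(v(\cdot,z))^2/(2T)$, then reduces everything to a purely geometric claim: every absolutely continuous curve $c$ in $\R^2\setminus\operatorname{int}K$ from $A$ to $B$ has length at least $\ell=d_{\partial K}(A,B)$.

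\emph{Geometric claim (main obstacle).} Let $P$ be the metric projection onto $K$. It is $1$-Lipschitz by convexity, and it maps $\R^2\setminus\operatorname{int}K$ into $\partial K$, because a point outside $K$ projects onto $\partial K$ and a point of $\partial K$ is fixed. Hence $P\circ c$ is an absolutely continuous curve in $\partial K$ from $A$ to $B$ with $\operatorname{Length}(P\circ c)\le\operatorname{Length}(c)$. By the definition of $d_{\partial K}$ it follows that $\operatorname{Length}(c)\ge\ell$. The delicate point is the a.e.\ trace identification of the slice endpoints, and I would handle it as described above.

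For $n=1$ no slicing is needed: $v$ itself is absolutely continuous with $v(\pm T)=\gamma(\pm T)$, because $W^{1,2}_0(-T,T)$ functions vanish at the endpoints.
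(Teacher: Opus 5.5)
Your proposal is correct and follows essentially the same route as the paper: slice in the $t$-direction, compose each slice with the $1$-Lipschitz metric projection onto $K$ (which maps $\R^2\setminus\operatorname{int}K$ into $\partial K$), then apply Cauchy--Schwarz after discarding the transverse energy. The only difference is that you identify the endpoint values of almost every slice by a direct $C_c^\infty$-approximation argument instead of citing trace compatibility on a Lipschitz cylinder; this is valid and does not even use that $D$ is Lipschitz. You should also state explicitly, as the paper does, that each continuous slice lies in the closed set $\R^2\setminus\operatorname{int}K$ for every $t$, not merely for almost every $t$.
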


\begin{proof}
Let $v$ be an admissible competitor.  If $n\ge2$, Sobolev slicing
\cite[Theorem~2.1.4]{Ziemer} and compatibility of Sobolev traces on a
Lipschitz cylinder \cite{Leoni2017} imply that, for almost every $z\in D$,
\[
 v_z:=v(\,\cdot\,,z)\in W^{1,2}((-T,T);\R^2)
 \subset AC([-T,T];\R^2)
\]
and
\[
 (v_z-u_z)(-T)=(v_z-u_z)(T)=0,
 \qquad
 v_z(-T)=A,
 \qquad
 v_z(T)=B.
\]
Since $\R^2\setminus\operatorname{int}K$ is closed and $v_z$ belongs to
it almost everywhere, continuity gives
\[
 v_z(t)\in\R^2\setminus\operatorname{int}K
 \qquad(-T\le t\le T).
\]
For $n=1$, the same statements follow from the absolutely continuous
representative of $v$.

Let $P_K$ be the metric projection onto the closed convex set $K$.  For $x,y\in\R^2$, the variational characterization of the nearest points gives
\[
 (x-P_Kx)\cdot(P_Ky-P_Kx)\le0,
 \qquad
 (y-P_Ky)\cdot(P_Kx-P_Ky)\le0.
\]
Adding these inequalities yields
\[
 |P_Kx-P_Ky|^2
 \le(x-y)\cdot(P_Kx-P_Ky),
\]
so $P_K$ is $1$-Lipschitz.  It fixes boundary points and maps every point of
$\R^2\setminus K$ to $\partial K$; hence it maps the entire allowed set
$\R^2\setminus\operatorname{int}K$ into $\partial K$.  Consequently,
\[
 \operatorname{Length}(v(\cdot,z))
 \ge \operatorname{Length}(P_K\circ v(\cdot,z))
 \ge d_{\partial K}(A,B)=\ell.
\]
Here and in the next display, the variable $z$ is omitted when $n=1$.
Cauchy--Schwarz on $(-T,T)$ gives
\[
 \int_{-T}^T|\partial_t v(t,z)|^2\dd t
 \ge \frac{\ell^2}{2T}.
\]
For $n\ge2$, integrating over $D$ and discarding the nonnegative
transverse energy yields
\[
 \int_\Omega|Dv|^2\dd x
 \ge |D|\frac{\ell^2}{2T}
 =\int_\Omega|Du|^2\dd x,
\]
because $\gamma$ has constant speed.  The one-dimensional proof is the
same argument without slicing and gives the stated value
$\ell^2/(2T)$.
\end{proof}

\begin{proof}[Proof of \cref{thm:sharpness}]
Fix $0<\alpha<1$ and set $p:=1+\alpha$.  Consider the convex body
\begin{equation}\label{eq:Kalpha}
 K_\alpha:=\{(X,Y)\in\R^2:|X|^p+|Y|^p\le1\}.
\end{equation}
Away from the coordinate axes its boundary is smooth.  Near the top point $(0,1)$ it is the graph
\begin{equation}\label{eq:top-graph}
 Y=g(X):=(1-|X|^p)^{1/p}
 =1-\frac1p|X|^p+O(|X|^{2p}).
\end{equation}
Moreover,
\[
 g'(X)
 =-\operatorname{sgn}(X)|X|^{p-1}
   (1-|X|^p)^{1/p-1}
 =-\operatorname{sgn}(X)|X|^\alpha+o(|X|^\alpha).
\]
To verify the asserted H\"older exponent, write
\[
 g'(X)=-\psi(X)a(X),
 \qquad
 \psi(X):=\operatorname{sgn}(X)|X|^\alpha,
 \qquad
 a(X):=(1-|X|^p)^{-\alpha/p}.
\]
The function $\psi$ is $\alpha$-H\"older, while $a$ is bounded and
Lipschitz near zero.  Hence $g'$ is $\alpha$-H\"older there.  On the
other hand,
\[
 \frac{|g'(X)-g'(0)|}{|X|}\sim |X|^{\alpha-1}\longrightarrow\infty,
\]
so $g'$ is not Lipschitz at the top point.  The same analysis, after
interchanging the coordinate axes, applies at the other axis points;
away from them the boundary is smooth.  A finite cover therefore gives
\[
 \partial K_\alpha\in C^{1,\alpha}\setminus C^{1,1}.
\]

Choose $A,B\in\partial K_\alpha$ on opposite sides of $(0,1)$ and
sufficiently close to it.  The boundary arc through $(0,1)$ then realizes
$d_{\partial K_\alpha}(A,B)$: its length tends to zero as
$A,B\to(0,1)$, while the complementary length tends to the perimeter of
$\partial K_\alpha$.  Let $\gamma:[-1,1]\to\partial K_\alpha$ be its
constant-speed parametrization.  For $n=1$ take $\Omega=(-1,1)$ and
$u(t)=\gamma(t)$; for $n\ge2$ take $D=(-1,1)^{n-1}$,
$\Omega=(-1,1)\times D$, and $u(t,z)=\gamma(t)$.  By
\cref{prop:shortest-arc}, $u$ is a global energy minimizer in its
fixed-trace class.  Since a local competitor extends by $u$ to a global
competitor, $u$ is also a local minimizing constraint map.  Moreover,
\[
 u\in C^{1,\alpha}(\overline\Omega;\R^2)
 \cap W^{1,2}(\Omega;\R^2\setminus\operatorname{int}K_\alpha).
\]

It remains to prove that $u$ is not locally $C^{1,1}$.  Near $(0,1)$,
let $s=s(X)$ denote arclength along the graph.  Since
$\dd s/\dd X=\sqrt{1+|g'(X)|^2}$ is bounded above and below, the maps
$X\mapsto s(X)$ and $s\mapsto X(s)$ are locally Lipschitz.  In graph
coordinates the unit tangent is
\[
 T(X)=\frac{(1,g'(X))}{\sqrt{1+|g'(X)|^2}}.
\]
Write $\widetilde T(s)$ for the same tangent in the arclength
parametrization, so that $T(X)=\widetilde T(s(X))$.  If $\widetilde T$
were Lipschitz in $s$, then $T$ would be Lipschitz in $X$ because
$X\mapsto s(X)$ is Lipschitz.
Since $T_1\ge c>0$ near the top and $g'=T_2/T_1$, this would make $g'$
Lipschitz, a contradiction.  Hence $u$ is not $C^{1,1}$ in any
neighborhood of an interior source point mapped to $(0,1)$.  This proves
the asserted sharpness in the H\"older scale.

For completeness, the unit normal satisfies $\omega_N(r)\le Cr^\alpha$,
so
\[
 \int_0^1\frac{\omega_N(r)}{r}\dd r
 \le C\int_0^1r^{\alpha-1}\dd r
 =\frac{C}{\alpha}<\infty.
\]
Thus the same example also has $C^1$--Dini boundary regularity.
\end{proof}

\paragraph{Declaration of generative AI and AI-assisted technologies in the manuscript preparation process}

 During the preparation of this work, the authors used ChatGPT (OpenAI) to assist in converting their proof manuscripts into a preliminary LaTeX draft and to identify potential grammatical and logical errors in the manuscript. After using this tool, the authors reviewed and edited the content as needed and take full responsibility for the content of the published article.

\paragraph{Acknowledgments}

The author is supported by the National Natural Science Foundation of China [grant number: 2025YFA1017601].

\bibliographystyle{plain}
\bibliography{constraint_maps_P7_1_final}

@article{EellsSampson1964,
  author  = {Eells, Jr., James and Sampson, J. H.},
  title   = {Harmonic mappings of {R}iemannian manifolds},
  journal = {American Journal of Mathematics},
  volume  = {86},
  number  = {1},
  year    = {1964},
  pages   = {109--160},
  doi     = {10.2307/2373037}
}

@book{Morrey1966,
  author    = {Morrey, Jr., Charles B.},
  title     = {Multiple Integrals in the Calculus of Variations},
  series    = {Grundlehren der mathematischen Wissenschaften},
  volume    = {130},
  publisher = {Springer-Verlag},
  address   = {Berlin},
  year      = {1966},
  doi       = {10.1007/978-3-540-69952-1}
}

@article{HildebrandtKaulWidman1977,
  author  = {Hildebrandt, Stefan and Kaul, Helmut and Widman, Kjell-Ove},
  title   = {An existence theorem for harmonic mappings of {R}iemannian manifolds},
  journal = {Acta Mathematica},
  volume  = {138},
  number  = {1--2},
  year    = {1977},
  pages   = {1--16}
}

@article{SchoenUhlenbeck1982,
  author  = {Schoen, Richard and Uhlenbeck, Karen},
  title   = {A regularity theory for harmonic maps},
  journal = {Journal of Differential Geometry},
  volume  = {17},
  number  = {2},
  year    = {1982},
  pages   = {307--335},
  doi     = {10.4310/jdg/1214436923}
}

@article{SchoenUhlenbeck1983,
  author  = {Schoen, Richard and Uhlenbeck, Karen},
  title   = {Boundary regularity and the {D}irichlet problem for harmonic maps},
  journal = {Journal of Differential Geometry},
  volume  = {18},
  number  = {2},
  year    = {1983},
  pages   = {253--268},
  doi     = {10.4310/jdg/1214437663}
}

@article{SchoenUhlenbeck1984,
  author  = {Schoen, Richard and Uhlenbeck, Karen},
  title   = {Regularity of minimizing harmonic maps into the sphere},
  journal = {Inventiones Mathematicae},
  volume  = {78},
  number  = {1},
  year    = {1984},
  pages   = {89--100},
  doi     = {10.1007/BF01388715}
}

@article{HardtKinderlehrerLin1986,
  author  = {Hardt, Robert and Kinderlehrer, David and Lin, Fang-Hua},
  title   = {Existence and partial regularity of static liquid crystal configurations},
  journal = {Communications in Mathematical Physics},
  volume  = {105},
  number  = {4},
  year    = {1986},
  pages   = {547--570},
  doi     = {10.1007/BF01238933}
}

@article{HardtLin1987,
  author  = {Hardt, Robert and Lin, Fang-Hua},
  title   = {Mappings minimizing the {$L^p$} norm of the gradient},
  journal = {Communications on Pure and Applied Mathematics},
  volume  = {40},
  number  = {5},
  year    = {1987},
  pages   = {555--588},
  doi     = {10.1002/cpa.3160400503}
}

@article{Luckhaus1988,
  author  = {Luckhaus, Stephan},
  title   = {Partial {H}{\"o}lder continuity for minima of certain energies among maps into a {R}iemannian manifold},
  journal = {Indiana University Mathematics Journal},
  volume  = {37},
  number  = {2},
  year    = {1988},
  pages   = {349--367}
}

@article{Bethuel1993,
  author  = {B{\'e}thuel, Fabrice},
  title   = {On the singular set of stationary harmonic maps},
  journal = {Manuscripta Mathematica},
  volume  = {78},
  number  = {4},
  year    = {1993},
  pages   = {417--443},
  doi     = {10.1007/BF02599324}
}

@book{Helein2002,
  author    = {H{\'e}lein, Fr{\'e}d{\'e}ric},
  title     = {Harmonic Maps, Conservation Laws and Moving Frames},
  edition   = {2},
  series    = {Cambridge Tracts in Mathematics},
  volume    = {150},
  publisher = {Cambridge University Press},
  address   = {Cambridge},
  year      = {2002},
  doi       = {10.1017/CBO9780511543036}
}

@book{Simon1996,
  author    = {Simon, Leon},
  title     = {Theorems on Regularity and Singularity of Energy Minimizing Maps},
  series    = {Lectures in Mathematics ETH Z{\"u}rich},
  publisher = {Birkh{\"a}user},
  address   = {Basel},
  year      = {1996},
  doi       = {10.1007/978-3-0348-9193-6}
}

@article{LionsStampacchia1967,
  author  = {Lions, J.-L. and Stampacchia, Guido},
  title   = {Variational inequalities},
  journal = {Communications on Pure and Applied Mathematics},
  volume  = {20},
  number  = {3},
  year    = {1967},
  pages   = {493--519},
  doi     = {10.1002/cpa.3160200302}
}

@article{LewyStampacchia1969,
  author  = {Lewy, Hans and Stampacchia, Guido},
  title   = {On the regularity of the solution of a variational inequality},
  journal = {Communications on Pure and Applied Mathematics},
  volume  = {22},
  number  = {2},
  year    = {1969},
  pages   = {153--188},
  doi     = {10.1002/cpa.3160220203}
}

@book{KinderlehrerStampacchia1980,
  author    = {Kinderlehrer, David and Stampacchia, Guido},
  title     = {An Introduction to Variational Inequalities and Their Applications},
  series    = {Classics in Applied Mathematics},
  volume    = {31},
  publisher = {Society for Industrial and Applied Mathematics},
  address   = {Philadelphia},
  year      = {2000},
  note      = {Reprint of the 1980 original},
  doi       = {10.1137/1.9780898719451}
}

@article{DuzaarFuchs1986,
  author  = {Duzaar, Frank and Fuchs, Martin},
  title   = {Optimal regularity theorems for variational problems with obstacles},
  journal = {Manuscripta Mathematica},
  volume  = {56},
  number  = {2},
  year    = {1986},
  pages   = {209--234},
  doi     = {10.1007/BF01172157}
}

@article{Duzaar1987,
  author  = {Duzaar, Frank},
  title   = {Variational inequalities and harmonic mappings},
  journal = {Journal f{\"u}r die reine und angewandte Mathematik},
  volume  = {374},
  year    = {1987},
  pages   = {39--60},
  doi     = {10.1515/crll.1987.374.39}
}

@article{FKSobstacle,
  author  = {Figalli, Alessio and Kim, Sunghan and Shahgholian, Henrik},
  title   = {Constraint maps with free boundaries: the obstacle case},
  journal = {Archive for Rational Mechanics and Analysis},
  volume  = {248},
  number  = {5},
  year    = {2024},
  pages   = {Paper No. 79},
  doi     = {10.1007/s00205-024-02032-5}
}

@article{FGKSreview,
  author  = {Figalli, Alessio and Guerra, Andr{\'e} and Kim, Sunghan and Shahgholian, Henrik},
  title   = {Constraint maps: insights and related themes},
  journal = {La Matematica},
  volume  = {5},
  year    = {2026},
  pages   = {Article 26},
  doi     = {10.1007/s44007-026-00209-w}
}

@article{Federer1959,
  author  = {Federer, Herbert},
  title   = {Curvature measures},
  journal = {Transactions of the American Mathematical Society},
  volume  = {93},
  number  = {3},
  year    = {1959},
  pages   = {418--491},
  doi     = {10.1090/S0002-9947-1959-0110078-1}
}

@article{LeobacherSteinicke2021,
  author  = {Leobacher, Gunther and Steinicke, Alexander},
  title   = {Existence, uniqueness and regularity of the projection onto differentiable manifolds},
  journal = {Annals of Global Analysis and Geometry},
  volume  = {60},
  year    = {2021},
  pages   = {559--587},
  doi     = {10.1007/s10455-021-09788-z}
}

@book{GilbargTrudinger,
  author    = {Gilbarg, David and Trudinger, Neil S.},
  title     = {Elliptic Partial Differential Equations of Second Order},
  series    = {Classics in Mathematics},
  publisher = {Springer},
  address   = {Berlin},
  year      = {2001}
}

@book{Leoni2017,
  author    = {Leoni, Giovanni},
  title     = {A First Course in Sobolev Spaces},
  edition   = {2},
  series    = {Graduate Studies in Mathematics},
  volume    = {181},
  publisher = {American Mathematical Society},
  address   = {Providence, RI},
  year      = {2017}
}

@book{Rudin1976,
  author    = {Rudin, Walter},
  title     = {Principles of Mathematical Analysis},
  edition   = {3},
  publisher = {McGraw--Hill},
  address   = {New York},
  year      = {1976}
}

@book{Ziemer,
  author    = {Ziemer, William P.},
  title     = {Weakly Differentiable Functions},
  series    = {Graduate Texts in Mathematics},
  volume    = {120},
  publisher = {Springer},
  address   = {New York},
  year      = {1989}
}

@book{Giaquinta1983,
  author    = {Giaquinta, Mariano},
  title     = {Multiple Integrals in the Calculus of Variations and Nonlinear Elliptic Systems},
  series    = {Annals of Mathematics Studies},
  volume    = {105},
  publisher = {Princeton University Press},
  address   = {Princeton, NJ},
  year      = {1983},
  doi       = {10.1515/9781400881628}
}

@article{Evans1991,
  author  = {Evans, Lawrence C.},
  title   = {Partial regularity for stationary harmonic maps into spheres},
  journal = {Archive for Rational Mechanics and Analysis},
  volume  = {116},
  number  = {2},
  year    = {1991},
  pages   = {101--113},
  doi     = {10.1007/BF00375587}
}

@article{Lin1999,
  author  = {Lin, Fang-Hua},
  title   = {Gradient estimates and blow-up analysis for stationary harmonic maps},
  journal = {Annals of Mathematics},
  volume  = {149},
  number  = {3},
  year    = {1999},
  pages   = {785--829},
  doi     = {10.2307/121073}
}

@article{Riviere2007,
  author  = {Rivi{\`e}re, Tristan},
  title   = {Conservation laws for conformally invariant variational problems},
  journal = {Inventiones Mathematicae},
  volume  = {168},
  number  = {1},
  year    = {2007},
  pages   = {1--22},
  doi     = {10.1007/s00222-006-0023-0}
}

@book{LinWang2008,
  author    = {Lin, Fang-Hua and Wang, Changyou},
  title     = {The Analysis of Harmonic Maps and Their Heat Flows},
  publisher = {World Scientific},
  address   = {Hackensack, NJ},
  year      = {2008},
  doi       = {10.1142/9789812779533}
}

@article{Caffarelli1998,
  author  = {Caffarelli, Luis A.},
  title   = {The obstacle problem revisited},
  journal = {Journal of Fourier Analysis and Applications},
  volume  = {4},
  number  = {4--5},
  year    = {1998},
  pages   = {383--402},
  doi     = {10.1007/BF02498216}
}

@book{CaffarelliSalsa2005,
  author    = {Caffarelli, Luis A. and Salsa, Sandro},
  title     = {A Geometric Approach to Free Boundary Problems},
  series    = {Graduate Studies in Mathematics},
  volume    = {68},
  publisher = {American Mathematical Society},
  address   = {Providence, RI},
  year      = {2005}
}

@book{PetrosyanShahgholianUraltseva2012,
  author    = {Petrosyan, Arshak and Shahgholian, Henrik and Uraltseva, Nina},
  title     = {Regularity of Free Boundaries in Obstacle-Type Problems},
  series    = {Graduate Studies in Mathematics},
  volume    = {136},
  publisher = {American Mathematical Society},
  address   = {Providence, RI},
  year      = {2012},
  doi       = {10.1090/gsm/136}
}

@article{BethuelBrezis1991,
  author  = {B{\'e}thuel, Fabrice and Brezis, Ha{\"i}m},
  title   = {Regularity of minimizers of relaxed problems for harmonic maps},
  journal = {Journal of Functional Analysis},
  volume  = {101},
  number  = {1},
  year    = {1991},
  pages   = {145--161},
  doi     = {10.1016/0022-1236(91)90152-U}
}

\bigskip
\noindent
(Yilu Liu) School of Mathematical Sciences, University of Science and
Technology of China, Hefei, 230026, Anhui Province, P.R. China.\\
Email address: \href{mailto:liuylgeoanaly@mail.ustc.edu.cn}{liuylgeoanaly@mail.ustc.edu.cn}

\end{document}